\newcommand{\mat}[1]{\underline{\mathbf{#1}}}
\newcommand\acclrvec[1]{\accentset{\,\leftrightarrow}{#1}}	
\newcommand{\blocktensor}[1]{\acclrvec{{\mathbf #1}}}	
\newcommand\blocktensorG[1]{\acclrvec{\boldsymbol #1}}	
\newcommand{\Nabla} {\vec{\nabla}}
\newcommand{\numfluxb}[1]{\hat{\mathbf{#1}} }
\newcommand\threeMatrix[1]{\underline{ #1}}				
\def\d{\mathrm{d}}
\newcommand{\bigpartialderiv}[2]{ \frac{\partial {#1}}{\partial {#2} } }
\newcommand\stateG[1]{\boldsymbol #1}			
\newcommand{\noncon}{\stateG{\Upsilon}}	
\newcommand\state[1]{\mathbf{#1}}
\newcommand{\supEuler}{{\mathrm{Euler}}}
\newcommand{\supMHD}{{\mathrm{MHD}}}
\newcommand{\supGLM}{{\mathrm{GLM}}}
\newcommand{\SBP}{{\mathrm{SBP}}}
\newcommand{\FV}{{\mathrm{FV}}}
\newcommand{\Jan}{\stateG{\Phi}}
\newcommand{\phiMHD}{\stateG{\phi}^\supMHD}
\newcommand{\phiGLM}{\blocktensorG{\phi}^\supGLM}
\newcommand{\phiGLMs}{\stateG{\phi}^\supGLM}
\newcommand{\numnonconsD}[1]{ #1^{\Diamond} }
\newcommand{\numnonconsS}[1]{ #1^{\star} }
\newcommand{\avg}[1]{\left\{\hspace*{-3pt}\left\{#1\right\}\hspace*{-3pt}\right\}}
\newcommand{\numnonconsSxi}[1]{ #1^{1\star} }
\newcommand{\numnonconsSeta}[1]{#1^{2\star} }
\newcommand{\numnonconsSzeta}[1]{ #1^{3\star} }
\def\NN{\mathcal{N}} 
\newtheorem{proposition}{Proposition}
\newtheorem{remark}{Remark}
\begin{document}

\let\WriteBookmarks\relax
\def\floatpagepagefraction{1}
\def\textpagefraction{.001}

\shorttitle{A Flux-Differencing SBP Formula for Non-Conservative Systems: Subcell Limiting}
\shortauthors{Rueda-Ramírez \& Gassner}

\title [mode = title]{A Flux-Differencing Formula for Split-Form Summation By Parts Discretizations of Non-Conservative Systems\\
{\normalsize Applications to Subcell Limiting for magneto-hydrodynamics}
}

\author[1]{Andrés M. Rueda-Ramírez}[
                        orcid=0000-0001-6557-9162]
\cormark[1]
\ead{aruedara@uni-koeln.de}

\credit{Conceptualization, Methodology, Software, Validation, Formal analysis, Data Curation, Writing - Original Draft, Visualization}

\author[1,4]{Gregor J. Gassner}
[orcid=0000-0002-1752-1158]
\credit{Conceptualization, Methodology, Validation, Formal analysis, Data Curation, Writing - Original Draft}

\address[1]{Department of Mathematics and Computer Science, University of Cologne, Weyertal 86-90, 50931 Cologne, Germany}

\address[4]{Center for Data and Simulation Science, University of Cologne, 50931 Cologne, Germany}

\cortext[cor1]{Corresponding author}

\begin{abstract}
In this paper, we show that diagonal-norm summation by parts (SBP) discretizations of general non-conservative systems of hyperbolic balance laws can be rewritten as a finite-volume-type formula, also known as flux-differencing formula, if the non-conservative terms can be written as the product of a local and a symmetric contribution.
Furthermore, we show that the existence of a flux-differencing formula enables the use of recent subcell limiting strategies to improve the robustness of the high-order discretizations.

To demonstrate the utility of the novel flux-differencing formula, we construct hybrid schemes that combine high-order SBP methods (the discontinuous Galerkin spectral element method and a high-order SBP finite difference method) with a compatible low-order finite volume (FV) scheme at the subcell level.
We apply the hybrid schemes to solve challenging magnetohydrodynamics (MHD) problems featuring strong shocks.
\end{abstract}



\begin{keywords}
SBP operator, non-conservative hyperbolic balance law, flux differencing, Discontinuous Galerkin Spectral Element Methods,
Subcell limiting
\end{keywords}

\maketitle

\section{Introduction}

Given a set of ordered nodes $\state{X} = \{x_0, x_1, \ldots, x_N\}$, $x_i \in \mathbb{R}$, two vectors of solution values at the nodes $\state{U}, \state{V}
\in \mathbb{R}^{N+1}$, a one-dimensional differentiation operator $\mat{D} \in \mathbb{R}^{N+1,N+1}$ and a positive definite matrix $\mat{M} \in \mathbb{R}^{N+1,N+1}$ (called norm or mass matrix), which defines an $L_2$ inner product in the interval $[x_0,x_N]$,
\begin{equation} \label{eq:SBP_diff_int}
\frac{\partial \state{U}}{\partial x} \approx \mat{D} \state{U},
\qquad
\int_{x_0}^{x_N} u \, v \, \d x \approx  \state{U}^T \mat{M} \state{V},
\end{equation}
the combination of $\mat{M}$ and $\mat{D}$ is defined as a first-derivative summation by parts (SBP) operator if it fulfills the properties
\begin{equation} \label{eq:SBP_property}
    \mat{Q} \coloneqq \mat{M}\,\mat{D},
    \qquad
    \mat{Q} + \mat{Q}^T = \mat{B},
\end{equation}
where $\mat{B} \coloneqq \mathrm{diag} ([-1, 0, \ldots, 0, 1]) \in \mathbb{R}^{N+1,N+1}$ is the so-called boundary evaluation matrix.


The basic theory of SBP operators was introduced by \citet{kreiss1974finite} in their search for provably stable high-order accurate finite difference (FD) methods for linear advection problems.
They introduced the SBP property \eqref{eq:SBP_property} as a discrete equivalent of integration by parts, which allowed them to mimic the continuous analysis to bound the energy norm of the solution.
After the work of \citet{kreiss1974finite}, SBP operators have been further developed and used for a broad range of applications \cite{gustafsson1975convergence,olsson1994high,strand1994summation,nordstrom2001high,carpenter1993stability,Fisher2013a,Carpenter2014,Gassner2013}.

1D SBP operators can be extended to multiple space dimensions using tensor product expansions (see, e.g., \cite{nordstrom2001high}).
For periodic problems in Cartesian meshes, the SBP property \eqref{eq:SBP_property} is sufficient to prove stability.
For non-periodic problems or problems where multi-element meshes are needed, \citet{funaro1987domain} proposed the use of penalty terms for the inter-element coupling and the weak imposition of boundary conditions.
\citet{carpenter1993stability} elaborated on these methods and introduced the simultaneous approximation term (SAT), which became the standard way to treat boundary terms and interfaces for FD methods based on SBP operators, e.g.,  \cite{fernandez2014review,del2019extension}.

\citet{Gassner2013} showed that the discontinuous Galerkin spectral element method (DGSEM)\footnote{The DGSEM is a nodal collocated variant of the discontinuous Galerkin method, which uses Legendre-Gauss or Legendre-Gauss-Lobatto nodes to represent the solution (via Lagrange interpolating polynomials) and to perform numerical integration.} \cite{black1999conservative,kopriva2009implementing} that uses Legendre-Gauss-Lobatto (LGL) points is an SBP operator with a diagonal norm and that the standard DG surface integral with numerical flux functions is a special case of SAT boundary treatments. 
This equivalence allowed to apply the stability theory of SBP operators to the DGSEM framework.
Moreover, the similarities between the LGL-DGSEM method and high-order FD methods based on SBP operators enable the use of Riemann solvers to do the inter-element coupling and the imposition of boundary conditions also for Finite Difference SBP operators, a usual practice in the DG community.
In fact, the different variants of the SAT technique can be interpreted as choices of numerical fluxes to approximate the solution to the Riemann problem at the element interfaces.

For nonlinear systems of conservation laws, the ability of SBP operators to mimic integration by parts at the discrete level has been exploited to construct entropy-stable discretizations \cite{Fisher2013,Fisher2013a,Carpenter2014} and other nonlinearly stable discretizations based on split forms of the governing equations \cite{Gassner2013,Gassner2016} that ensure kinetic energy preservation or pressure equilibrium.

\citet{Fisher2013a} showed that diagonal-norm SBP discretizations of systems of conservation laws can be written as a finite-volume-type formula.
The existence of a flux-differencing formula guarantees local (node-wise) conservation in the sense of Lax-Wendroff.
Moreover, they showed \cite{Fisher2013a,Carpenter2014} that local entropy conservation/dissipation can also be guaranteed with the right choice of numerical volume fluxes.\\

High-order schemes can produce nonphysical oscillations in the presence of discontinuities of the solution, e.g., shocks.
Moreover, shocks and under-resolved turbulence can lead to nonphysical negative values of some solution quantities \cite{Rueda-Ramirez2020}, such as density or pressure.
To deal with this issues, several stabilization techniques based on artificial viscosity \cite{hiltebrand2014entropy,Persson2006,Klockner2011} or the combination of the high-order scheme with a more robust low-order scheme, see e.g.,  \cite{Vilar2019,Sonntag2017,Hennemann2020,Rueda-Ramirez2020,Rueda-Ramirez2021,kuzmin2020monolithic,kuzmin2010failsafe,hajduk2021monolithic,guermond2019invariant}.
Among the latter class of techniques, the hybrid DGSEM/FV scheme of \citet{Hennemann2020},which computes a convex combination of the high-order and the low-order operators in an element-wise manner, is an interesting choice because it can be used with any SBP operator, has been used to discretize general non-conservative systems of balance laws \cite{Rueda-Ramirez2020}, is able to handle strong shocks efficiently \cite{RUEDARAMIREZ2022}, and can be used to impose bounds on physical quantities \cite{Rueda-Ramirez2021}.

In a recent work \cite{RUEDARAMIREZ2022}, we showed that the flux-differencing formula of \citet{Fisher2013a} is not only useful to show local conservation, but it can also be used to construct bounds-preserving methods using subcell-wise limiting (as in, e.g. \cite{Pazner2020}).
As opposed to the element-wise limiting technique \cite{Hennemann2020,Rueda-Ramirez2021,Rueda-Ramirez2020}, the subcell-wise limiting technique computes node-local convex combinations of the high- and low-order operators, leading to a very local addition of dissipation.
However, our subcell-wise blending procedure \cite{RUEDARAMIREZ2022} is restricted to systems of conservation laws since the flux-differencing formula of \citet{Fisher2013a} holds only for this kind of systems.

In this work, we show that discretizations of non-conservative systems of balance laws that use diagonal-norm SBP operators can also be written as a flux-differencing formula.
Further, we use the novel flux-differencing form of the equation to formulate hybrid FV/SBP discretizations which are bounds-preserving.
We illustrate the advantages of the new flux-differencing formula to formulate bounds-preserving high-order discontinuous Galerkin and finite difference discretizations of the non-conservative magnetohydrodynamics equations augmented with a generalized Lagrange multiplier hyperbolic divergence cleaning technique (the GLM-MHD equations).
These high-order discretizations are stabilized by combining them locally (at the node level) with a robust FV scheme, which allows to do \textit{a-posteriori} limiting, e.g., imposing maximum principle and entropy constraints, and \textit{a-priori} limiting using feature-based indicators.

\section{Numerical Methods}

In this work, we consider general nonlinear systems of balance laws of the form
\begin{equation} \label{eq:noncons-system}
\bigpartialderiv{}{t} \state{u} 
+ \bigpartialderiv{}{x} \blocktensor{f} (\state{u}) 
+ \underbrace{
    \stateG{\phi} (\state{u}) \circ \bigpartialderiv{\state{b}}{x}
}_{\noncon}
= \state{0},
\end{equation}
where 
$\state{u}$ is a state vector, $\blocktensor{f}^a$ is a possibly nonlinear advective flux function, and $\noncon$ is a non-conservative term, which can be written as the Hadamard (element-wise) product of the state vector $\stateG{\phi}(\state{u})$ and the gradient of $\state{b}$, a spatially varying quantity that may depend on $\state{u}$.
Many equations can be written in this form, including the shallow water equations with variable bottom topography, the entropy consistent magnetohydrodynamics equations \cite{Powell2001,Dedner2002,Derigs2017}, the Baer-Nunziato equations for multiphase flow \cite{baer1986two}, among others.

For compactness and readability, in this section, we present numerical discretization schemes for the one-dimensional non-conservative system of balance laws \eqref{eq:noncons-system}. 
The schemes for three-dimensional non-conservative systems on general curvilinear meshes can be obtained using tensor-product expansions and are summarized for completeness in Appendix \ref{app:3D}.

\subsection{High-Order SBP Discretization} \label{sec:GLM-MHD}

To approximate the solution of \eqref{eq:noncons-system}, we tessellate the domain into non-overlapping elements and allow the solution to be discontinuous across element interfaces.
In each element, we use a generalized version of a diagonal-norm SBP operator with two-point fluxes
\cite{Fisher2013a,Gassner2013,Carpenter2014,Gassner2016}, where we include a non-symmetric two-point term to discretize the non-conservative terms of the equation.
The diagonal-norm SBP discretization of \eqref{eq:noncons-system} for the degree of freedom $j \in [0,N]$ reads \cite{rueda2022entropy}
\begin{empheq}{align} \label{eq:DGSEM} 
m_j \dot{\state{u}}^{\SBP}_j 
+ &
\underbrace{
\sum_{k=0}^N S_{jk} \left( \state{f}^{*}_{(j,k)} + \numnonconsS{\Jan}_{(j,k)} \right)
}_{\mathrm{Volume \, term}}
- 
\underbrace{
\delta_{j0} \left( \numfluxb{f}_{(0,L)} + \numnonconsD{\Jan}_{(0,L)} \right)
+ \delta_{jN} \left( \numfluxb{f}_{(N,R)} + \numnonconsD{\Jan}_{(N,R)} \right)
}_{\mathrm{Surface \,  term}}
= \state{0},
\end{empheq}
where $m_j$ is the entry of the diagonal mass matrix that corresponds to node $j$, and $\delta_{ij}$ denotes Kronecker's delta function with node indexes $i$ and $j$. For simplicity, we absorb the element geometry mapping Jacobian $J$ in the definition of the mass matrix.
As we have shown in \cite{rueda2022entropy}, \eqref{eq:DGSEM} is algebraically equivalent to the high-order LGL-DGSEM schemes proposed by different authors (e.g.,  \cite{Bohm2018,Renac2019,coquel2021entropy,rueda2022entropy}) to discretize non-conservative systems.
In this work, we choose the current notation, as it will be useful to derive the flux-differencing formula.


The discretization is separated in volume and surface terms terms. 
In the surface term, $\numfluxb{f}$ denotes the surface numerical flux function, or SAT term, and $\numnonconsD{\Jan}_{(j,k)}$ denotes the surface numerical non-conservative term.
These terms are typically based on approximate Riemann solvers and thus depend on the local values and the values from the neighbor elements on the left ($L$) and right ($R$).

The volume term is computed by applying the skew-symmetric derivative matrix $\mat{S}\coloneqq 2\mat{Q}-\mat{B} = \mat{Q} - \mat{Q}^T$ to the so-called volume numerical flux ${\state{f}}^{*}_{(j,k)}$, a two-point flux function evaluated between nodes $j$ and $k$ that needs to be consistent with the continuous flux and symmetric in its two arguments, and the so-called volume numerical non-conservative term $\numnonconsS{\Jan}_{(j,k)}$, a non-symmetric two-point term evaluated between nodes $j$ and $k$ that needs to be consistent with $\stateG{\Phi} := \stateG{\phi}(\state{u}) \circ \state{b}$. The particular choice of numerical volume fluxes allow to generate flexible split-formulations of the non-linear PDE terms that help for de-aliasing and may even enable provable entropy stability, e.g., \cite{Fisher2013,Fisher2013a,Carpenter2014,Gassner2013,Gassner2016,Renac2019}.

\subsection{Flux-Differencing Formula} \label{sec:fluxdiff}

\begin{proposition}\label{prop:fluxdiff}
It is possible to rewrite \eqref{eq:DGSEM} as a flux-differencing formula,
\begin{equation} \label{eq:fluxdiff}
    m_j \dot{\state{u}}^{\SBP}_j = 
    \stateG{\Gamma}^{\SBP}_{(j,j-1)}
    -\stateG{\Gamma}^{\SBP}_{(j,j+1)}
    ,
    \qquad
    j=0, \ldots, N,
\end{equation}
where the indexes $j=-1$ and $j=N+1$ refer to the outer states (across the left and right boundaries, respectively) and $\stateG{\Gamma}^{\SBP}_{(j,k)}$ is the so-called staggered (or telescoping) ``flux'' between node $j$ and the \textbf{adjacent} node $k$,
if it is possible to write the volume numerical non-conservative term as a product of a local and a symmetric contribution,
\begin{equation} \label{eq:condition}
    \numnonconsS{\Jan}_{(j,k)} := \Jan^{\mathrm{loc}}_j \circ \, \Jan^{\mathrm{sym}}_{(j,k)},
\end{equation}
where $\Jan^{\mathrm{loc}}_j := \Jan^{\mathrm{loc}} (\state{u}_j,\state{b}_j)$ only depends on local quantities, and  $\Jan^{\mathrm{sym}}_{(j,k)} := \Jan^{\mathrm{sym}}_{(k,j)}$ is a symmetric two-point flux that depends on values at nodes $j$ and $k$.

The staggered fluxes are then defined as
\begin{align}
\stateG{\Gamma}^{\SBP}_{(0,-1)}  &= \numfluxb{f}_{(0,L)} + \numnonconsD{\Jan}_{(0,L)}
\label{eq:leftFlux}
\\
\stateG{\Gamma}^{\SBP}_{(j,k)} &= \sum_{l=0}^{\min(j,k)} \sum_{m=0}^N S_{lm} \state{f}^{*}_{(l,m)} 
+ \Jan^{\mathrm{loc}}_j \circ \sum_{l=0}^{\min(j,k)} \sum_{m=0}^N S_{lm} \Jan^{\mathrm{sym}}_{(l,m)}, & j=0, \ldots, N-1, k \in \{j-1,j+1\},
\label{eq:internFlux}\\
\stateG{\Gamma}^{\SBP}_{(N,N+1)} &= \numfluxb{f}_{(N,R)} +  \numnonconsD{\Jan}_{(N,R)}.
\label{eq:rightFlux}
\end{align}
\end{proposition}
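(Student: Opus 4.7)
The plan is to verify the telescoping identity \eqref{eq:fluxdiff} by direct substitution of the staggered-flux definitions \eqref{eq:leftFlux}--\eqref{eq:rightFlux} into its right-hand side and comparing the result, term by term, with \eqref{eq:DGSEM}. The factorization assumption \eqref{eq:condition} is the only new structural ingredient beyond the conservative case treated by \citet{Fisher2013a}, so the argument is essentially careful bookkeeping.

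First I would insert \eqref{eq:condition} into the volume term of \eqref{eq:DGSEM}. Because $\Jan^{\mathrm{loc}}_j$ is independent of the summation index $k$, the Hadamard product pulls out of the sum:
\[
\sum_{k=0}^N S_{jk}\,\numnonconsS{\Jan}_{(j,k)} \;=\; \Jan^{\mathrm{loc}}_j \circ \sum_{k=0}^N S_{jk}\,\Jan^{\mathrm{sym}}_{(j,k)}.
\]
I would then abbreviate $\sigma_l := \sum_{m=0}^N S_{lm}\,\state{f}^{*}_{(l,m)}$ and $\tau_l := \sum_{m=0}^N S_{lm}\,\Jan^{\mathrm{sym}}_{(l,m)}$, so that \eqref{eq:DGSEM} reads $m_j\dot{\state{u}}^{\SBP}_j = -\sigma_j - \Jan^{\mathrm{loc}}_j\circ\tau_j$ plus surface contributions that are nonzero only at $j\in\{0,N\}$.

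Next I would compute $\stateG{\Gamma}^{\SBP}_{(j,j-1)}-\stateG{\Gamma}^{\SBP}_{(j,j+1)}$ in three cases. For interior nodes $1\le j\le N-1$, both staggered fluxes come from \eqref{eq:internFlux}; one has outer limit $j-1$ and the other $j$, and since the prefactor $\Jan^{\mathrm{loc}}_j$ is the same on both sides, the subtraction leaves only the $l=j$ row and produces exactly $-\sigma_j-\Jan^{\mathrm{loc}}_j\circ\tau_j$. For $j=0$ the left staggered flux reduces to the boundary contribution $\numfluxb{f}_{(0,L)}+\numnonconsD{\Jan}_{(0,L)}$ via \eqref{eq:leftFlux}, while the right one, evaluated with $\min(0,1)=0$, gives $\sigma_0+\Jan^{\mathrm{loc}}_0\circ\tau_0$, matching the surface term of \eqref{eq:DGSEM}. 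The case $j=N$ is the most delicate: adopting the natural convention that \eqref{eq:internFlux} also applies for $j=N,\,k=N-1$ (so the outer sum runs up to $\min(N,N-1)=N-1$), and using \eqref{eq:rightFlux} for $\stateG{\Gamma}^{\SBP}_{(N,N+1)}$, the identity reduces to the claim that $\sum_{l=0}^{N-1}\sigma_l=-\sigma_N$ and $\sum_{l=0}^{N-1}\tau_l=-\tau_N$, i.e., the total row sums vanish.

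This last identity is the only non-mechanical step, and it is where the hypotheses enter decisively. From the SBP property $\mat{Q}+\mat{Q}^T=\mat{B}$ one has $\mat{S}=\mat{Q}-\mat{Q}^T$, hence $S_{lm}=-S_{ml}$; combined with the symmetries $\state{f}^{*}_{(l,m)}=\state{f}^{*}_{(m,l)}$ and $\Jan^{\mathrm{sym}}_{(l,m)}=\Jan^{\mathrm{sym}}_{(m,l)}$, relabeling $l\leftrightarrow m$ in $\sum_{l,m=0}^N S_{lm}\state{f}^{*}_{(l,m)}$ flips its sign while preserving its value, forcing both double sums to be zero. The local factor $\Jan^{\mathrm{loc}}_j$ never enters this telescoping because inside a fixed equation it is merely a constant prefactor, which is precisely why the factorization \eqref{eq:condition} into a \emph{local} and a \emph{symmetric} piece is essential: any $k$-dependence in that prefactor would obstruct pulling it outside the sum and break the identity. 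With the row-sum identity in hand, all three cases collapse onto \eqref{eq:DGSEM}, completing the argument.
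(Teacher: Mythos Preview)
Your proposal is correct and follows essentially the same approach as the paper: direct verification of \eqref{eq:fluxdiff} in the three cases $j=0$, $1\le j\le N-1$, and $j=N$, with the $j=N$ case reduced to the vanishing of the total double sums via the skew-symmetry of $\mat{S}$ and the symmetry of $\state{f}^{*}$ and $\Jan^{\mathrm{sym}}$. The only cosmetic difference is that you introduce the shorthand $\sigma_l,\tau_l$ and explicitly flag the natural extension of \eqref{eq:internFlux} to the pair $(N,N-1)$, which the paper uses without comment.
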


\begin{proof}
It suffices to evaluate \eqref{eq:fluxdiff} for the boundary nodes, $j=0$ and $j=N$, and for an internal node $j \notin \{0,N\}$.

For the left boundary, $j=0$, we obtain
\begin{align*}
    m_0 \dot{\state{u}}^{\SBP}_0 &= 
    \stateG{\Gamma}^{\SBP}_{(0,-1)}
    -\stateG{\Gamma}^{\SBP}_{(0,1)} \\
    &=
    \numfluxb{f}_{(0,L)} + \numnonconsD{\Jan}_{(0,L)}
    - 
    \sum_{m=0}^N S_{0m} \state{f}^{*}_{(0,m)} 
    - \Jan^{\mathrm{loc}}_j \circ     \sum_{m=0}^N S_{0m}     \Jan^{\mathrm{sym}}_{(0,m)}
    \\
\mathrm{(using~eq.~\eqref{eq:condition})} \qquad
    &=
    \numfluxb{f}_{(0,L)} + \numnonconsD{\Jan}_{(0,L)}
    - 
    \sum_{m=0}^N S_{0m} \left( \state{f}^{*}_{(0,m)} + \numnonconsS{\Jan}_{(0,m)} \right),
\end{align*}
which is exactly equivalent to \eqref{eq:DGSEM} for $j=0$.

For the right boundary, $j=N$, we obtain
\begin{align*}
    m_N \dot{\state{u}}^{\SBP}_N &= 
    \stateG{\Gamma}^{\SBP}_{(N,N-1)}
    -\stateG{\Gamma}^{\SBP}_{(N,N+1)} \\
    =&
    \sum_{l=0}^{N-1} \sum_{m=0}^N S_{lm} \state{f}^{*}_{(l,m)} 
+ \Jan^{\mathrm{loc}}_j \circ \sum_{l=0}^{N-1} \sum_{m=0}^N S_{lm} \Jan^{\mathrm{sym}}_{(l,m)}
    - \numfluxb{f}_{(N,R)}  
    - \numnonconsD{\Jan}_{(N,R)}
    \\
    =&
    \sum_{l=0}^{N} \sum_{m=0}^N S_{lm} \state{f}^{*}_{(l,m)} 
    + \Jan^{\mathrm{loc}}_j \circ \sum_{l=0}^{N} \sum_{m=0}^N S_{lm} \Jan^{\mathrm{sym}}_{(l,m)}
    \\
    &-\sum_{m=0}^N S_{Nm} \state{f}^{*}_{(N,m)} 
    - \Jan^{\mathrm{loc}}_j \circ \sum_{m=0}^N S_{Nm} \Jan^{\mathrm{sym}}_{(N,m)}
    - \numfluxb{f}_{(N,R)}  
    - \numnonconsD{\Jan}_{(N,R)}
    \\
\mathrm{(Skew-symmetry~of~\mat{S}~\&~eq.~\eqref{eq:condition})} \qquad
    =&
    \frac{1}{2} \sum_{l=0}^{N} \sum_{m=0}^N (S_{lm} - S_{ml}) \state{f}^{*}_{(l,m)} 
    + \frac{1}{2} \Jan^{\mathrm{loc}}_j \circ \sum_{l=0}^{N} \sum_{m=0}^N (S_{lm} - S_{ml}) \Jan^{\mathrm{sym}}_{(l,m)}
    \\
    &-\sum_{m=0}^N S_{Nm} \left( \state{f}^{*}_{(N,m)} + \Jan^{\star}_{(N,m)} \right)
    - \numfluxb{f}_{(N,R)}  
    - \numnonconsD{\Jan}_{(N,R)}
    \\
\mathrm{(re-index~\&~symmetry~of~\state{f}^*,\Jan^{\mathrm{sym}})} \qquad
    =&-\sum_{m=0}^N S_{Nm} \left( \state{f}^{*}_{(N,m)} + \Jan^{\star}_{(N,m)} \right)
    - \numfluxb{f}_{(N,R)}  
    - \numnonconsD{\Jan}_{(N,R)},
\end{align*}
which is again exactly equivalent to \eqref{eq:DGSEM} for $j=N$.

Finally, for an arbitrary internal degree of freedom, $j \notin \{0,N\}$, we obtain
\begin{align*}
    m_j \dot{\state{u}}^{\SBP}_j =&
    \stateG{\Gamma}^{\SBP}_{(j,j-1)}
    -\stateG{\Gamma}^{\SBP}_{(j,j+1)}
    \\
    =&
    \sum_{l=0}^{j-1} \sum_{m=0}^N S_{lm} \state{f}^{*}_{(l,m)} 
    + \Jan^{\mathrm{loc}}_j \circ \sum_{l=0}^{j-1} \sum_{m=0}^N S_{lm} \Jan^{\mathrm{sym}}_{(l,m)}
    - \sum_{l=0}^{j} \sum_{m=0}^N S_{lm} \state{f}^{*}_{(l,m)} 
    - \Jan^{\mathrm{loc}}_j \circ \sum_{l=0}^{j} \sum_{m=0}^N S_{lm} \Jan^{\mathrm{sym}}_{(l,m)}
    \\
    =&
    - \sum_{m=0}^N S_{jm} \state{f}^{*}_{(j,m)} 
    - \Jan^{\mathrm{loc}}_j \circ \sum_{m=0}^N S_{jm} \Jan^{\mathrm{sym}}_{(j,m)}
    \\
\mathrm{(using~eq.~\eqref{eq:condition})} \qquad
    =&
    -
    \sum_{m=0}^N S_{jm} \left( \state{f}^{*}_{(j,m)} + \numnonconsS{\Jan}_{(j,m)} \right),
\end{align*}
which is the desired result.
\end{proof}

\begin{remark}
In the absence of non-conservative terms, $\noncon = \stateG{\Phi} = 0$, equations \eqref{eq:leftFlux}-\eqref{eq:rightFlux} reduce to the telescoping fluxes proposed by \citet{Fisher2013a}, which are symmetric, $\stateG{\Gamma}^{\SBP}_{(j,k)} = \stateG{\Gamma}^{\SBP}_{(k,j)} $, i.e., locally conservative at the node level.
\end{remark}

\begin{remark}
The terms $\stateG{\Gamma}^{\SBP}_{(j,k)}$ and $\stateG{\Gamma}^{\SBP}_{(k,j)} $ are in general not symmetric and, therefore, not unique for each interface between nodes. 
They account for the non-conservative nature of the system at each subcell interface.
\end{remark}

\subsection{Subcell Limiting}

Now that the high-order SBP scheme has been written as a flux-differencing formula, we follow the strategies presented in \cite{Hennemann2020,Pazner2020,Rueda-Ramirez2021,Rueda-Ramirez2020,RUEDARAMIREZ2022} and write a convex combination of the high-order SBP scheme with a robust low-order FV method.

The convex combination can be done in an element-wise manner,
\begin{equation} \label{eq:Elem_blend}
\dot{\state{u}}_{j} = (1-\alpha) \dot{\state{u}}_{j}^{\SBP} + \alpha \dot{\state{u}}_{j}^{\FV} \quad \forall j,
\end{equation}
where $\alpha$ is a piece-wise constant blending coefficient that is selected independently for each element.
However, the new flux-differencing formula enables the use of subcell-wise limiting for high-order SBP discretizations of non-conservative systems,
\begin{equation} \label{eq:Subcell_blend}
m_j \dot{\state{u}}_{j} =
\left(
  \stateG{\Gamma}_{(j-1,j)}
- \stateG{\Gamma}_{(j,j+1)}
\right),
\end{equation}
where each interface flux $\stateG{\Gamma}_{(\cdot,\cdot)}$ is computed as a convex combination of DG and FV fluxes,
\begin{equation}\label{eq:Subcell_blend2}
    \stateG{\Gamma}_{(a,b)} = (1-\alpha_{(a,b)}) \stateG{\Gamma}^{\SBP}_{(a,b)} + \alpha_{(a,b)} \stateG{\Gamma}^{\FV}_{(a,b)}.
\end{equation}
An individual blending coefficient $\alpha_{(a,b)}$ can be selected for each interface between any two adjacent nodes, $a$ and $b$, to impose positivity, entropic or non-oscillatory constraints to the numerical scheme.
With this subcell limiting strategy, every element has a collection of local blending factors $\alpha_{(a,b)}$ that determine locally the amount of FV mixed to the high-order SBP scheme. We refer to \cite{RUEDARAMIREZ2022} for more details on the options available for the blending.

\section{Application}

In this section, we illustrate the utility of the
novel flux-differencing formula to formulate robust high-order SBP discretizations of the entropy-consistent magneto-hydrodynamics equations augmented with a generalized Lagrange multipliers hyperbolic divergence cleaning technique (the GLM-MHD equations).
We formulate element- and subcell-wise combinations of the high-order SBP methods with a first-order FV scheme and test their robustness and dissipation properties with two popular benchmark examples for the MHD equations, the Orszag-Tang vortex and the MHD rotor test.

\subsection{The GLM-MHD System}

In this work, we use the variant of the ideal GLM-MHD equations that is consistent with the continuous entropy analysis of \citet{Derigs2018}.
The system of equations reads 
\begin{equation} \label{eq:GLM-MHD}
\partial_t \mathbf{u} 
+ \Nabla \cdot \blocktensor{f}^a (\mathbf{u}) 
+ \noncon(\mathbf{u}, \Nabla \mathbf{u})
= \state{0},
\end{equation}
with the state vector $\state{u} = (\rho, \rho \vec{v}, \rho E, \vec{B}, \psi)^T$, the advective flux $\blocktensor{f}^a$, 
and the non-conservative term $\noncon$.
Here, $\rho$ is the density, $\vec{v} = (v_1, v_2, v_3)^T$ is the velocity, $E$ is the specific total energy, $\vec{B} = (B_1, B_2, B_3)^T$ is the magnetic field, and $\psi$ is the so-called \textit{divergence-correcting field}, a generalized Lagrange multiplier (GLM) that is added to the original MHD system to minimize the magnetic field divergence. 
While these equations do not enforce the divergence-free condition exactly, $\Nabla \cdot \vec{B} = 0$, they evolve towards a divergence-free state \cite{Munz2000,Dedner2002,Derigs2018}.

The advective flux contains Euler, ideal MHD and GLM contributions,
\begin{equation}
\blocktensor{f}^a(\mathbf{u}) = \blocktensor{f}^{a,\supEuler} +\blocktensor{f}^{a,\supMHD}+\blocktensor{f}^{a,\supGLM}=
\begin{pmatrix} 
\rho \vec{v} \\[0.15cm]
\rho (\vec{v}\, \vec{v}^{\,T}) + p\threeMatrix{I} \\[0.15cm]
\vec{v}\left(\frac{1}{2}\rho \left\|\vec{v}\right\|^2 + \frac{\gamma p}{\gamma -1}\right)  \\[0.15cm]
\threeMatrix{0}\\ \vec{0}\\[0.15cm]
\end{pmatrix} +
\begin{pmatrix} 
\vec{0} \\[0.15cm]
\frac{1}{2 \mu_0} \|\vec{B}\|^2 \threeMatrix{I} - \frac{1}{\mu_0} \vec{B} \vec{B}^T \\[0.15cm]
\frac{1}{\mu_0} \left( \vec{v}\,\|\vec{B}\|^2 - \vec{B}\left(\vec{v}\cdot\vec{B}\right) \right) \\[0.15cm]
\vec{v}\,\vec{B}^T - \vec{B}\,\vec{v}^{\,T} \\ \vec{0}\\[0.15cm]
\end{pmatrix} +
\begin{pmatrix} 
\vec{0} \\[0.15cm]
\threeMatrix{0} \\[0.15cm]
\frac{c_h}{\mu_0} \psi \vec{B} \\[0.15cm]
c_h \psi \threeMatrix{I} \\ c_h \vec{B}\\[0.15cm]
\end{pmatrix},
\label{eq:advective_fluxes}
\end{equation}
where $p$ is the gas pressure, $\threeMatrix{I}$ is the $3\times 3$ identity matrix, $\mu_0$ is the permeability of the medium, and $c_h$ is the \textit{hyperbolic divergence cleaning speed}.

We close the system with the (GLM) calorically perfect gas assumption \cite{Derigs2018},
\begin{equation}
p = (\gamma-1)\left(\rho  E - \frac{1}{2}\rho\left\|\vec{v}\right\|^2 - \frac{1}{2 \mu_0}\|\vec{B}\|^2 - \frac{1}{2 \mu_0}\psi^2\right),
\label{eqofstate}
\end{equation}
where $\gamma$ denotes the heat capacity ratio, which we take as $\gamma=5/3$ for all examples of this section.

The non-conservative term has two main components, $\noncon = \noncon^\supMHD + \noncon^\supGLM$, with
\begin{align}
\noncon^\supMHD &= (\Nabla \cdot \vec{B}) \phiMHD =  \left(\Nabla \cdot \vec{B}\right) 
\left( 0 \,,\, \mu_0^{-1} \vec{B} \,,\, \mu_0^{-1} \vec{v}\cdot\vec{B} \,,\,  \vec{v} \,    ,\, 0 \right)^T \,, \label{Powell}\\ 
\noncon^\supGLM &= \phiGLM \cdot \Nabla \psi \quad =   \phiGLMs_1 \,\frac{\partial \psi}{\partial x} + \phiGLMs_2 \frac{\partial \psi}{\partial y} + \phiGLMs_3 \frac{\partial \psi}{\partial z} \,,\label{NC_GLM}
\end{align}
where $\phiGLM$ is a block vector with
\begin{equation}\label{Galilean}
\phiGLMs_\ell = \mu_0^{-1} \left(0 \,,\, 0\,,\,0\,,\,0\,,\,  v_\ell \psi \,,\, 0\,,\,0\,,\,0\,,\, v_\ell \right)^T, \quad \ell = 1,2,3.
\end{equation} 
The first non-conservative term, $\noncon^\supMHD$, is the well-known Godunov-Powell ``source'' term \cite{Powell2001}, which is needed to retain entropy stability \cite{godunov1972symmetric,powell1994approximate,powell1999solution,Derigs2018,Chandrashekar2016} and positivity of the low-order method \cite{wu2018positivity,wu2019provably}.
The second non-conservative term, $\noncon^\supGLM$, is required to retain Galilean invariance \cite{Munz2000,Dedner2002,Derigs2017,Derigs2018}.

We note that for a magnetic field with vanishing divergence, $\Nabla \cdot \vec{B} = 0$, equation \eqref{eq:GLM-MHD} reduces to the conservative ideal MHD equations, which describe the conservation of mass, momentum, energy, and magnetic flux.

\subsection{Discretization}

We discretize the GLM-MHD system using two different high-order SBP schemes,
\begin{enumerate}[(i)]
    \item a fourth-order accurate DGSEM on Gauss-Lobatto nodes \cite{Bohm2018}, which uses four degrees of freedom in each direction of each element,
    \item and a fourth-order accurate finite difference method with $13$ equispaced nodes in each direction of each element \cite[Appendix A]{fernandez2014review},
\end{enumerate}
and combine them with a first-order finite volume scheme at the element-wise (see, e.g., \cite{Rueda-Ramirez2020}) and subcell-wise levels.

The fourth-order accurate finite difference method \cite[Appendix A]{fernandez2014review} needs at least $13$ equispaced nodes in each direction of each element.
We chose to use the minimum number of nodes, such that the dissipation is applied as locally as possible by the element-wise limiting method.

We use the standard Rusanov flux for the surface numerical fluxes and the entropy-conservative flux of \citet{hindenlang2019new} for the volume numerical two-point fluxes.
This volume numerical two-point flux is constructed to mimic the curl in the induction equation and, hence, the volume term does not contribute to the divergence error of the method.

The Powell numerical non-conservative two-point term can be written in many forms that are algebraically equivalent when introduced in the SBP discretization. 
In this work, we use the expression developed by \citet{rueda2022entropy}, as it is a product of local and symmetric parts,
\begin{empheq}{align} \label{eq:numNonCons3D_other}
\numnonconsSxi{\tilde{\Jan}}_{(i,m)jk} := 
\underbrace{
 \phiMHD_{ijk}
 }_{\Jan^{\mathrm{loc}}_{ijk}} 
 \underbrace{
 \avg{\vec{B}}_{(i,m)jk} \cdot \avg{J\vec{a}^1}_{(i,m)jk}
 }_{\Jan^{\mathrm{sym}}_{(i,m)jk}},
\end{empheq}
where $\avg{\cdot}$ denotes the average operator, $J$ is the Jacobian of the element mapping and $\vec{a}$ is the contravariant metric vector (see Appendix~\ref{app:3D} for more details).
We remark that \eqref{eq:numNonCons3D_other} also holds for the Brackbill and Janhunen non-conservative terms.

Finally, the GLM non-conservative two-point term is written as \cite{rueda2022entropy}
\begin{empheq}{align} \label{eq:numNonCons3D_other2}
\numnonconsSxi{\tilde{\Jan}}_{(i,m)jk} :=
\underbrace{
\phiGLM_{ijk} \cdot J\vec{a}^1_{ijk}
 }_{\Jan^{\mathrm{loc}}_{ijk}} 
\underbrace{
\avg{\psi}_{(i,m)jk}
}_{\Jan^{\mathrm{sym}}_{(i,m)jk}},
\end{empheq}
which is also the product of a local and a symmetric non-conservative term.

To advance in time, we use the explicit third-order three-stages strong-stability-preserving Runge-Kutta method of \citet{shu1988efficient}.
The CFL restrictions of the DGSEM and SBP finite difference methods are similar.
However, since there are different algebraic expressions for the CFL condition of DGSEM and SBP finite difference methods in the literature, we use a constant stable time-step size throughout the simulations presented below, to allow a focus on the spatial discretization behavior.

\subsection{Orszag-Tang Vortex Problem}

The Orszag-Tang vortex problem \cite{orszag1979small} is a 2D inviscid MHD problem that is widely used to test the robustness of MHD codes \cite{Ciuca2020,Chandrashekar2016,Derigs2018}.
A smooth initial condition evolves into complex shock-shock interactions and transitions into supersonic MHD turbulence.

We tessellate the simulation domain, $\Omega = [0,1]^2$, with a Cartesian grid of $79 \times 79$ elements for the SBP-FD scheme, leading to a mesh with $1027 \times 1027$ degrees of freedom, and with $256 \times 256$ elements for the DGSEM scheme, leading to a mesh with $1024 \times 1024$ degrees of freedom.
Furthermore, we use periodic boundary conditions and set the initial condition to \cite{Ciuca2020,Chandrashekar2016}
\begin{align*}
\rho(x,y,t=0) &= \frac{25}{36 \pi},
&  p(x,y,t=0) &= \frac{5}{12 \pi}, \\
 v_1(x,y,t=0) &= - \sin (2 \pi y), 
&v_2(x,y,t=0) &=   \sin (2 \pi x), \\
 B_1(x,y,t=0) &= -\frac{1}{\sqrt{4 \pi}} \sin (2 \pi y), 
&B_2(x,y,t=0) &= -\frac{1}{\sqrt{4 \pi}} \sin (4 \pi x),\\
 \psi(x,y,t=0) &= 0.
\end{align*}

We run the simulation until the final time $t=1$ and use a constant time-step size $\Delta t = 8 \times 10^{-5}$, which is stable for both discretization schemes.
For this problem, we use a constant hyperbolic divergence cleaning speed $c_h=1$, which does not decrease the maximum allowable time-step size.

To deal with discontinuities and provide physically relevant solutions, we compare two different techniques to obtain the blending coefficient $\alpha$ for the hybrid SBP/FV methods: an \textit{a-posteriori} technique based on flux-corrected transport \cite{zalesak1979fully,kuzmin2010failsafe,kuzmin2020monolithic} and invariant domain preserving \cite{guermond2019invariant,maier2021efficient,Pazner2020} techniques, and an \textit{a-priori} method that uses a feature-based indicator.

\subsubsection{\textit{A-posteriori} Limiting}

We impose a local discrete maximum principle type constraint on the density at each RK stage and each degree of freedom $ij$,
\begin{equation} \label{eq:IDPconditionOT}
    \min_{k \in \NN (ij)} {\rho}^{\FV}_{k}
    \le \rho_{ij} \le
    \max_{k \in \NN (ij)} {\rho}^{\FV}_{k},
\end{equation}
where $\NN (ij)$ is the low-order stencil at node $ij$, $\rho_k^{\FV}$ is the solution at the next RK stage that is obtained with the low-order method for node $k$.
If limiting is needed to enforce \eqref{eq:IDPconditionOT}, we further impose a discrete minimum principle on a modified specific entropy \cite{guermond2019invariant,maier2021efficient},
\begin{equation} \label{eq:IDPconditionEnt}
    \min_{k \in \NN (ij)} \theta({\state{u}}^{\FV}_{k})
    \le \theta({\state{u}}_{ij}),
\end{equation}
where the use of the modified specific entropy, $\theta = e \rho^{1- \gamma}$, guarantees the fulfillment of a discrete (global) entropy inequality and is a particularly efficient choice, as $\theta$ is computationally cheaper to evaluate than the \textit{standard} specific entropy, $s=\ln (p \rho^{-\gamma})$.

This kind of limiting strategy is called \textit{a-posteriori} limiting because conditions \eqref{eq:IDPconditionOT} and \eqref{eq:IDPconditionEnt} require that both the low- and high-order operators are evaluated, and the bounds-preserving correction is done after each RK step is taken for the schemes.

To impose \eqref{eq:IDPconditionOT}, we compute $\alpha_{ij}$ at each degree of freedom of each element using a Zalesak-type limiter \cite{zalesak1979fully,RUEDARAMIREZ2022}.
To impose \eqref{eq:IDPconditionEnt} we perform a line search with a Newton-bisection method to obtain the necessary $\alpha_{ij}$ at each degree of freedom of each element, as explained in \cite{RUEDARAMIREZ2022,Rueda-Ramirez2021}.
Note that the modified specific entropy is better suited for line searches with a Newton's method than the \textit{standard} specific entropy \cite{guermond2019invariant,maier2021efficient}.

For the element-wise limiting strategy, \eqref{eq:Elem_blend}, we choose the blending coefficient of each element simply as the maximum of all degrees of freedom of the element,
\begin{equation}
    \alpha = \max_{ij} \alpha_{ij}.
\end{equation}
For the subcell-wise limiting strategy, \eqref{eq:Subcell_blend}-\eqref{eq:Subcell_blend2}, we choose the blending coefficient of each interface between two nodes as the maximum over the selected coefficient for the two nodes, e.g.,
\begin{equation}
    \alpha_{(i,i+j)j} = \max (\alpha_{ij},\alpha_{i+1,j}).
\end{equation}

Conditions \eqref{eq:IDPconditionOT} and \eqref{eq:IDPconditionEnt} are particularly restrictive and can affect the order of accuracy of the scheme significantly \cite{guermond2019invariant,maier2021efficient,kuzmin2020monolithic,Pazner2020,RUEDARAMIREZ2022}.
Therefore, a common practice is to use a smoothness indicator and enforce conditions \eqref{eq:IDPconditionOT} and \eqref{eq:IDPconditionEnt} only where the solution is not smooth.
The smoothness detection can be done using modal troubled-cell indicators \cite{Persson2006,RUEDARAMIREZ2022}, comparing a functional of the solution within the scheme's stencil \cite{guermond2019invariant}, computing higher-order derivatives of the solution using reconstructions \cite{diot2012improved}, among other techniques.
Since all these smoothness indicator techniques depend on the node spacing and element sizes, different methodologies, such as for instance DG and FD, will flag different elements. Hence, we do not use a smoothness indicator to pre-select the elements for limiting in this section to provide a direct comparison of the limiting performance of the DGSEM and finite difference methods.

\begin{figure}[h!]
	\centering
	\includegraphics[trim=385 825 1819 0 ,clip,width=0.26\linewidth]{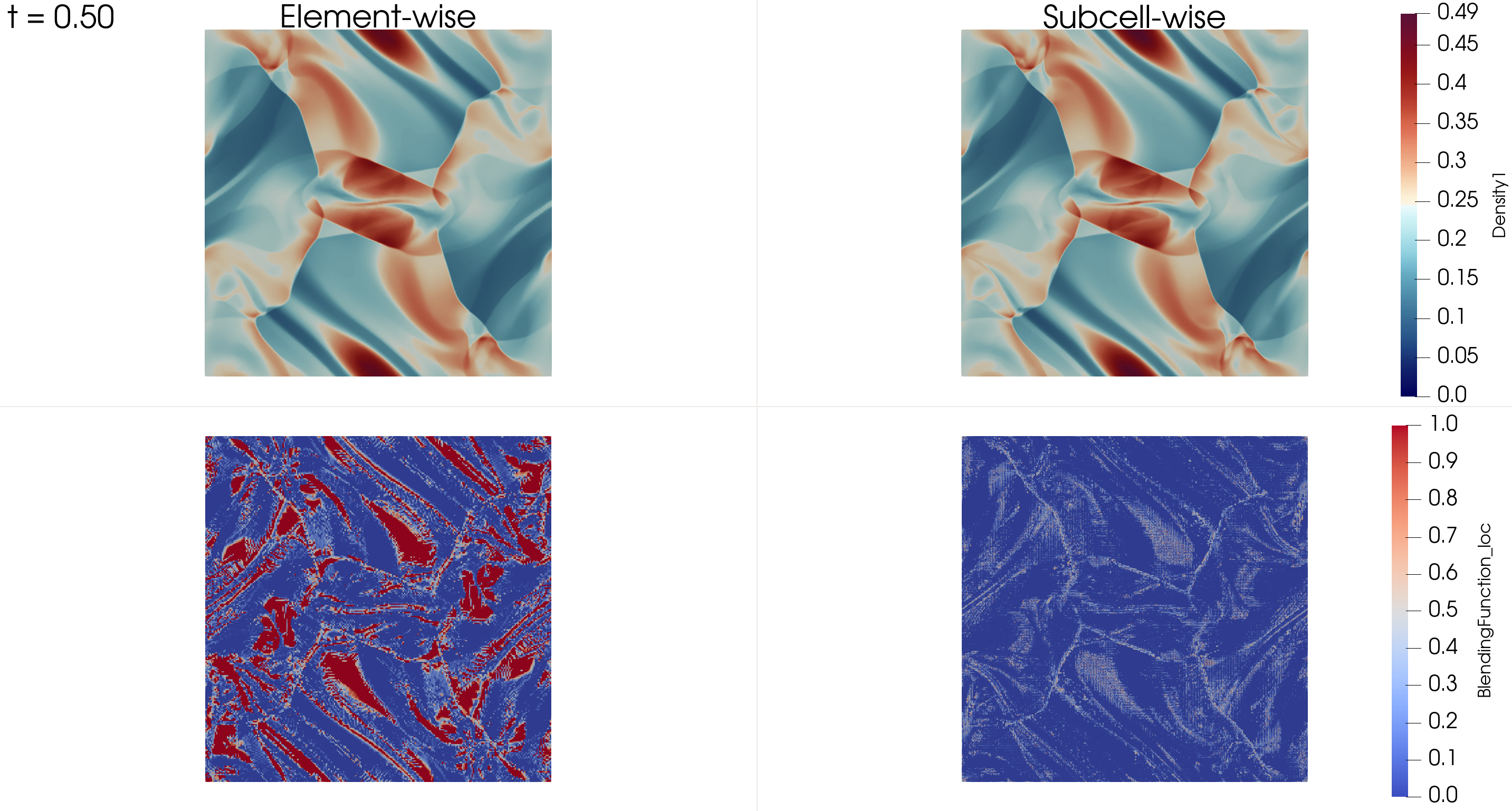}
	\includegraphics[trim=1819 825 385 0 ,clip,width=0.26\linewidth]{figs/OT_IDP/OT_t_0.5_DGSEM.png}
	\includegraphics[trim=2780 800 0 0 ,clip,height=0.26\linewidth]{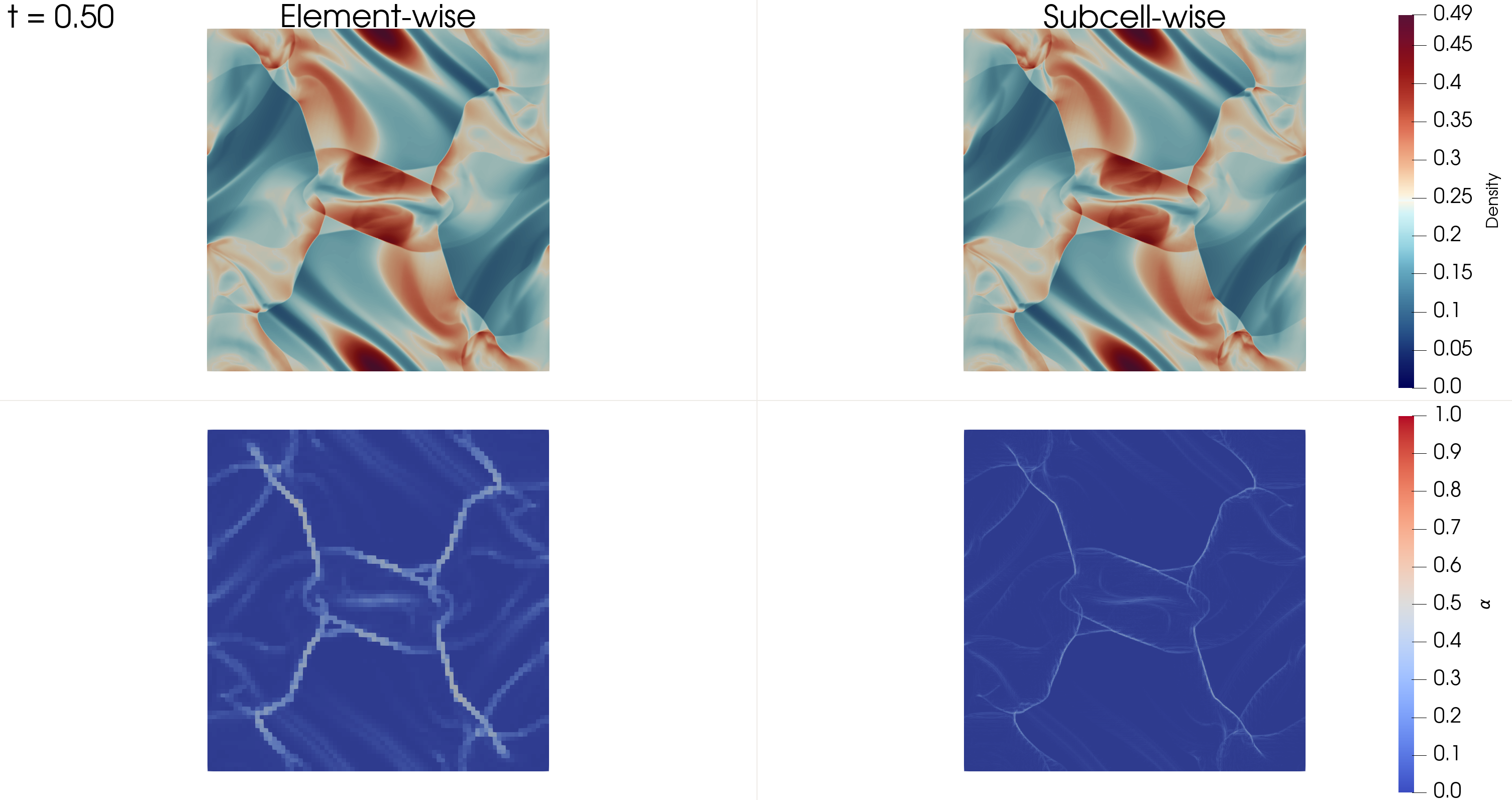}
	
	\includegraphics[trim=385 55 1819 825 ,clip,width=0.26\linewidth]{figs/OT_IDP/OT_t_0.5_DGSEM.png}
	\includegraphics[trim=1819 55 385 825 ,clip,width=0.26\linewidth]{figs/OT_IDP/OT_t_0.5_DGSEM.png}
	\includegraphics[trim=2780 0 0 800 ,clip,height=0.26\linewidth]{figs/OT_Loehner/OT_t_0.5_FD.png}
	\caption{Density and blending coefficient $\alpha$ for the Orszag-Tang vortex problem obtained with the hybrid DGSEM/FV method at $t=0.50$ using \textit{a-posteriori} element-wise blending (left) and subcell-wise blending (right) based on \eqref{eq:IDPconditionOT} and \eqref{eq:IDPconditionEnt}.}
	\label{fig:OT_IDP/OT_DG}
	
	\includegraphics[trim=385 825 1819 0 ,clip,width=0.26\linewidth]{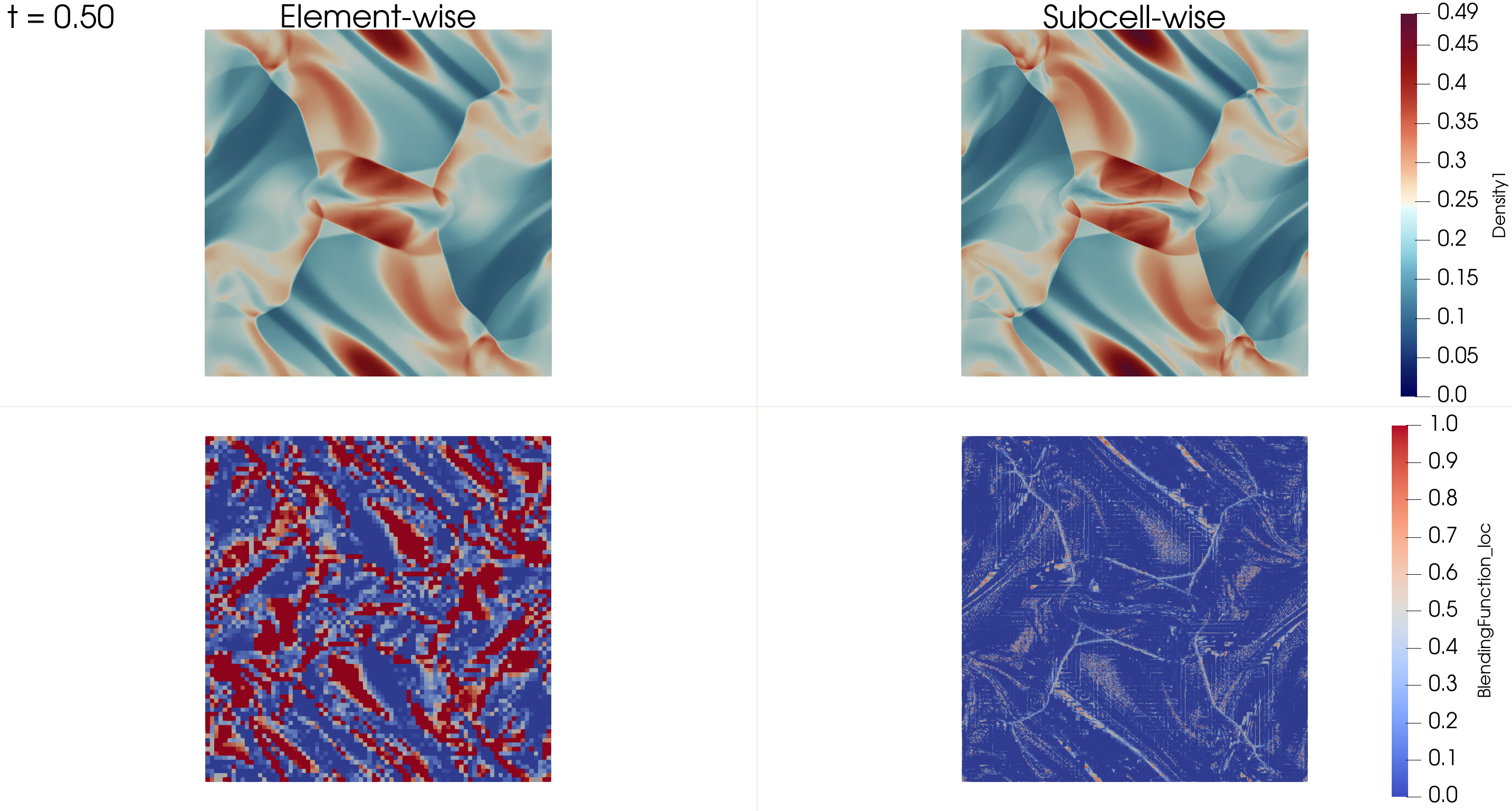}
	\includegraphics[trim=1819 825 385 0 ,clip,width=0.26\linewidth]{figs/OT_IDP/OT_t_0.5_FD.png}
	\includegraphics[trim=2780 800 0 0 ,clip,height=0.26\linewidth]{figs/OT_Loehner/OT_t_0.5_FD.png}
	
	\includegraphics[trim=385 55 1819 825 ,clip,width=0.26\linewidth]{figs/OT_IDP/OT_t_0.5_FD.png}
	\includegraphics[trim=1819 55 385 825 ,clip,width=0.26\linewidth]{figs/OT_IDP/OT_t_0.5_FD.png}
	\includegraphics[trim=2780 0 0 800 ,clip,height=0.26\linewidth]{figs/OT_Loehner/OT_t_0.5_FD.png}
	\caption{Density and blending coefficient $\alpha$ for the Orszag-Tang vortex problem obtained with the hybrid FD/FV method at $t=0.50$ using \textit{a-posteriori} element-wise blending (left) and subcell-wise blending (right) based on \eqref{eq:IDPconditionOT} and \eqref{eq:IDPconditionEnt}.}
	\label{fig:OT_IDP/OT_FD}
\end{figure}

Figures \ref{fig:OT_IDP/OT_DG} and \ref{fig:OT_IDP/OT_FD} show the density and blending coefficient contours at $t=0.5$ obtained with the element-wise and subcell-wise hybrid SBP/FV methods for the \textit{a-posteriori} limiting.
Subcell-wise limiting clearly improves the solution quality with respect to element-wise limiting for both DGSEM and high-order SBP-FD schemes.
Both methods perform similarly for subcell-wise limiting, but the element-wise high-order SBP-FD/FV scheme is more dissipative than the element-wise DGSEM/FV scheme since its elements are larger by a factor of approximately $16$ (in area).
Thus, the dissipation of the first-order FV is applied over a larger area when conditions \eqref{eq:IDPconditionOT} or \eqref{eq:IDPconditionEnt} are violated.
If we used more nodes in each direction of each element, as is common for high-order FD methods, the element-wise hybrid scheme would produce more dissipative results, with the extreme being a one-element (block) FD approach being dissipated at every node in every time step.

To compare the dissipation properties of the methods quantitatively, we compute the evolution of the mean blending coefficient and the total (mathematical) entropy in the domain.

The mean blending coefficient is computed as
\begin{equation} \label{eq:meanAlpha}
\bar \alpha (t) = 
\frac{1}{n_r} \sum_{r=1}^{n_r} \left( \frac{1}{V} \int_{\Omega} \alpha(t_r) \d \vec{x} \right)
\approx \frac{1}{n_r} \sum_{r=1}^{n_r} \left( \frac{1}{V} \sum_{e=1}^K \sum_{i,j=0}^N m_{ij} (\alpha^e_{ij})^r \right),
\end{equation}
where $e \in [1,K]$ denotes the element index, $K$ is the number of elements in the domain, $i,j \in [0,N]$ are the node indexes, $N$ is the polynomial degree, $(\alpha^e_{ij})^r$ is the blending coefficient of node $ij$ of element $e$ at the RK stage $r$, $n_r$ is the number of RK stages taken from $t-\Delta \tau$ to $t$, $\Delta \tau$ is the sample time, and $V$ is the area of the domain. A value $\bar \alpha = 1$ means that the entire domain uses a first-order FV method, whereas $\bar \alpha = 0$ means that it uses only the high-order SBP method.

The total entropy is computed as
\begin{equation} \label{eq:totalEntropy}
S_{\Omega}(t) = 
- \int_{\Omega}  \frac{\rho s}{\gamma-1} \d \vec{x}
\approx - \sum_{e=1}^K \sum_{i,j=0}^N m_{ij} \frac{\rho^e_{ij} s^e_{ij}}{\gamma-1},
\end{equation}
where $s = \ln\left(p \rho^{-\gamma}\right)$ is the specific entropy.

Figure \ref{fig:OT_IDP/OT_alpha_ent} shows the evolution of the mean blending coefficient $\bar \alpha$ (with a sample time $\Delta \tau = 0.01$) and the total entropy $S_{\Omega}$ in time for the four different schemes analyzed in this section.
The figure shows that both subcell-wise schemes add a similar amount of low-order stabilization, which results in a similar entropy dissipation.
The amount of limiting added by the element-wise schemes is higher throughout the simulation, as observed in Figures \ref{fig:OT_IDP/OT_DG} and \ref{fig:OT_IDP/OT_FD}, which results in a higher entropy dissipation.
As discussed before, the element-wise high-order SBP-FD/FV scheme is more dissipative than the element-wise
DGSEM/FV scheme since its elements are larger.

Finally, Figure~\ref{fig:OT_IDP/OT_slice} shows the pressure along a slice of the simulation domain for the four different \textit{a-posteriori} methods and a comparison with the results obtained by the second-order solver of the astrophysics code Athena \cite{Stone2008} on a mesh with $2048^2$ degrees of freedom.
As expected, the schemes with a higher dissipation are unable to capture small features of the solution.

\begin{figure}
    \centering
	\includegraphics[trim=0 0 0 0 ,clip,width=0.45\linewidth]{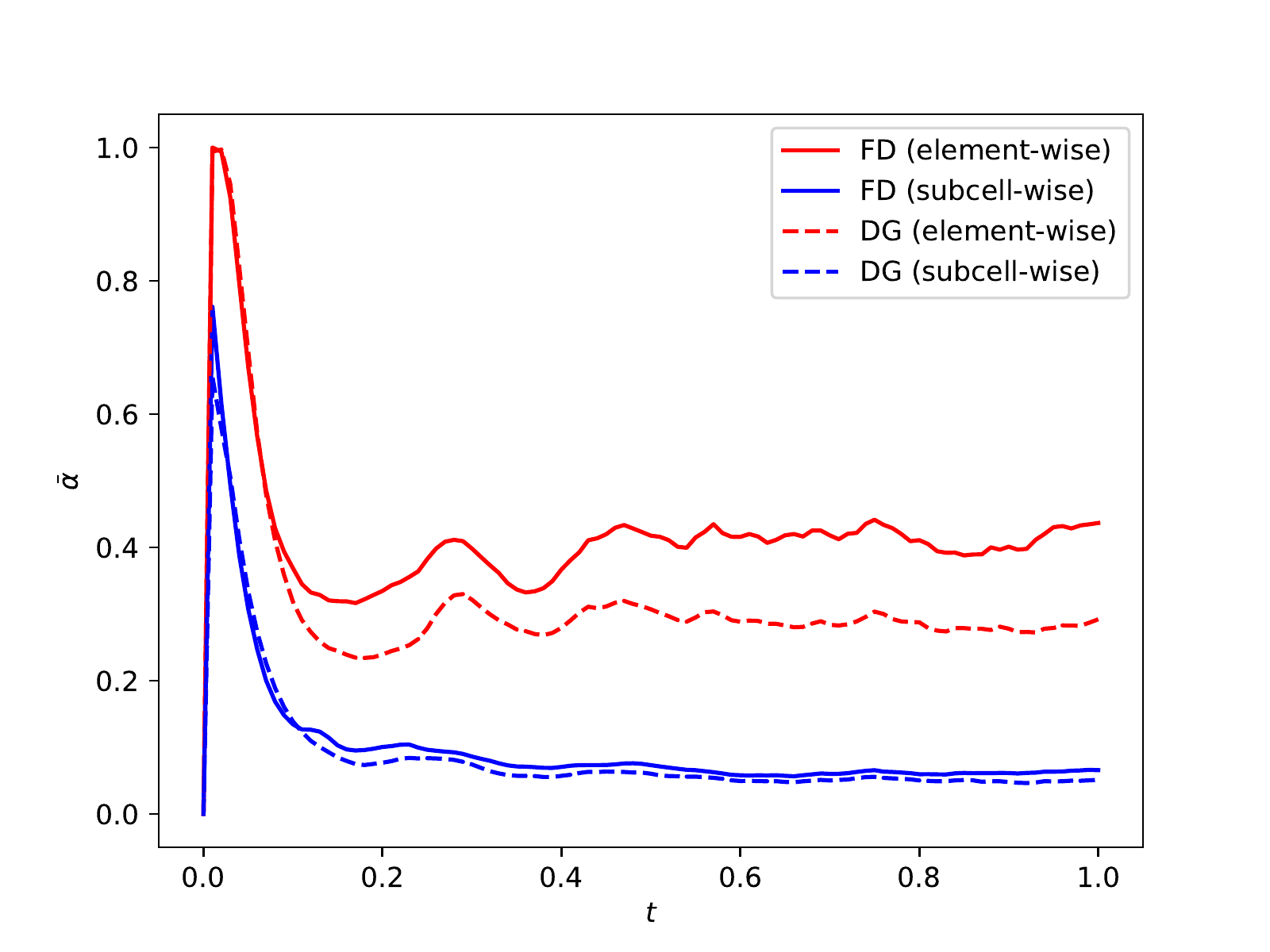}
	\includegraphics[trim=0 0 0 0 ,clip,width=0.45\linewidth]{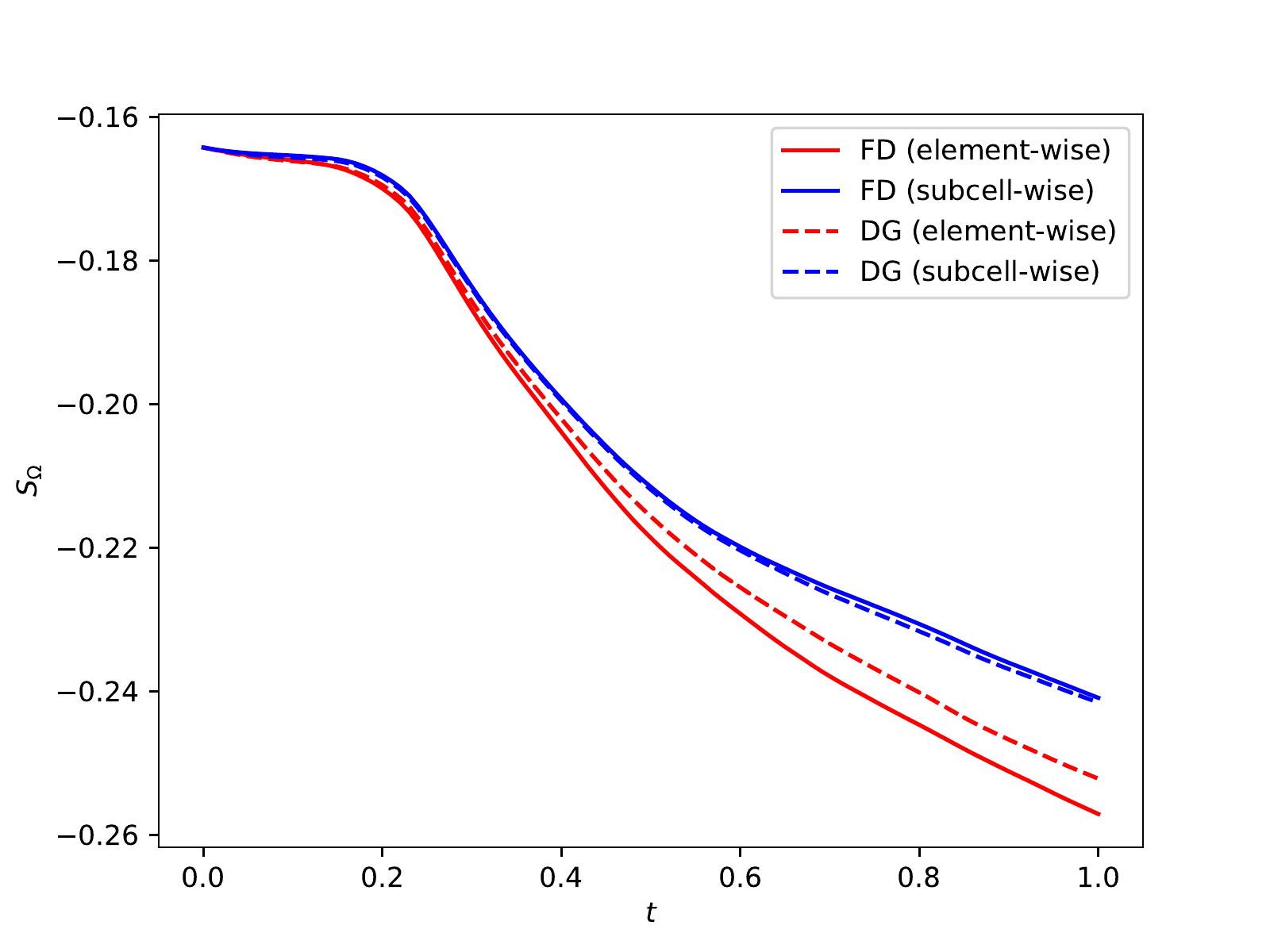}
	\caption{Evolution of the mean blending coefficient and the total entropy for the Orszag-Tang vortex test using \textit{a-posteriori} limiting based on \eqref{eq:IDPconditionOT} and \eqref{eq:IDPconditionEnt}.}
	\label{fig:OT_IDP/OT_alpha_ent}

    \centering
	\includegraphics[trim=0 0 0 0 ,clip,width=0.8\linewidth]{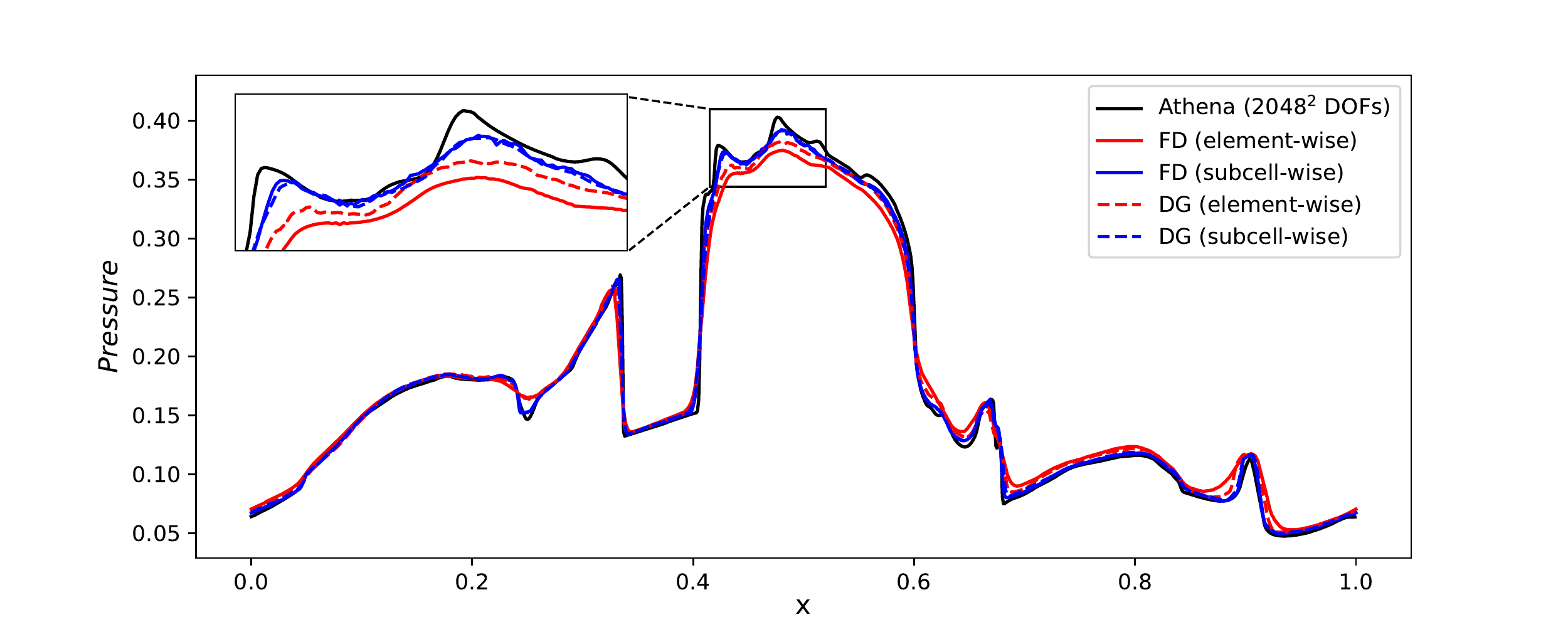}
	\caption{Slice of dimensionless pressure for the Orszag-Tang vortex test at $t=0.5$ and y=0.4277 using \textit{a-posteriori} limiting based on \eqref{eq:IDPconditionOT} and \eqref{eq:IDPconditionEnt}, and comparison with the second-order solver of Athena \cite{Stone2008} on a finer mesh \cite{Rueda-Ramirez2020}.}
	\label{fig:OT_IDP/OT_slice}
\end{figure}

\subsubsection{\textit{A-priori} Limiting}

To compute the blending coefficient before the evaluation of the SBP and FV operators (\textit{a priori}), we use a modification of the indicator of \citet{lohner1987adaptive}, which computes a modified second derivative, normalized by the gradient.
In particular, we evaluate the maximum over each spatial direction,
\begin{equation} \label{eq:Loehner}
    \alpha_{ij} \approx \max \left\lbrace \max_{d=1,2} \left(
    \frac
    {\left| \bigpartialderiv{{}^2u_{ij}}{x^2_d}\right|}
    {2\left| \bigpartialderiv{u_{ij}}{x_d}\right| + \epsilon \bigpartialderiv{{}^2|u|_{ij}}{x^2_d} }
    \right), 1 \right\rbrace,
\end{equation}
where we use the indicator quantity $u=\rho \, p$.
The last term in the denominator of \eqref{eq:Loehner} acts as a filter that prevents that the indicator is triggered in small ripples, with the heuristically determined constant $\epsilon=0.2$.

We base our implementation of the Löhner indicator on the Flash code\footnote{\url{https://flash.rochester.edu/site/flashcode/user_support/flash4_ug/node14.html\#SECTION05163000000000000000}} \cite{fryxell2000flash}, where the partial derivative terms in \eqref{eq:Loehner} are approximated with central finite differences.
However, we modify the expression to take into account the possibly irregular grids of high-order SBP operators,
\begin{equation} \label{eq:Loehner_FD}
    \frac
    {\left| \bigpartialderiv{{}^2u_{i}}{x^2}\right|}
    {2\left| \bigpartialderiv{u_{i}}{x}\right| + \epsilon \bigpartialderiv{{}^2|u|_{i}}{x^2} }
    \approx
    \frac
    {\left| \Delta x^-_{i} u_{i+1} - (\Delta x^-_{i}+ \Delta x^+_{i}) u_{i} + \Delta x^+_{i} u_{i-1}  \right|}
    {\Delta x^-_{i}|u_{i+1}-u_{i}| + \Delta x^+_{i} |u_{i}-u_{i-1}| 
    + \epsilon \left( \Delta x^-_{i} |u_{i+1}| + (\Delta x^-_{i}+ \Delta x^+_{i}) |u_{i}| + \Delta x^+_{i} |u_{i-1}|  \right) },
\end{equation}
where $\Delta x^-_{i}:=x_i-x_{i-1}$ and $\Delta x^+_{i}\coloneqq x_{i+1} - x_i$ denote grid size to the left and right of node $i$.
When $\Delta x^-_{i}=\Delta x^+_{i}$, \eqref{eq:Loehner_FD} reduces to the finite difference formula used in the Flash code.
However, for the irregular LGL-DGSEM grids, we obtained better results with \eqref{eq:Loehner_FD} than with the expression that assumes $\Delta x^-_{i}=\Delta x^+_{i}$.

\begin{figure}[h!]
	\centering
	\includegraphics[trim=409 852 1915 0 ,clip,width=0.26\linewidth]{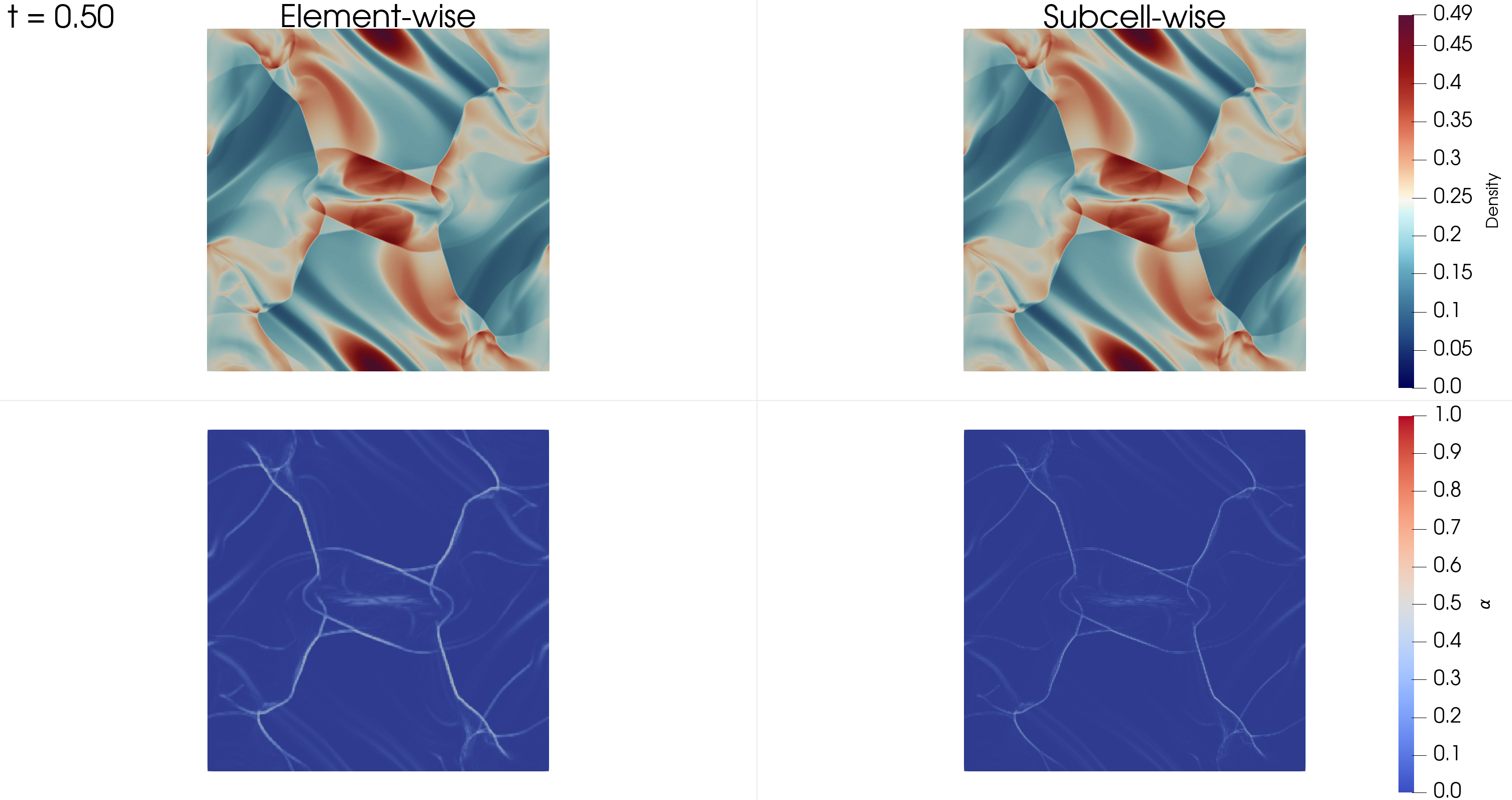}
	\includegraphics[trim=1915 852 409 0 ,clip,width=0.26\linewidth]{figs/OT_Loehner/OT_t_0.5_DGSEM.png}
	\includegraphics[trim=2780 800 0 0 ,clip,height=0.26\linewidth]{figs/OT_Loehner/OT_t_0.5_DGSEM.png}
	
	\includegraphics[trim=409 55 1915 852 ,clip,width=0.26\linewidth]{figs/OT_Loehner/OT_t_0.5_DGSEM.png}
	\includegraphics[trim=1915 55 409 852 ,clip,width=0.26\linewidth]{figs/OT_Loehner/OT_t_0.5_DGSEM.png}
	\includegraphics[trim=2780 0 0 800 ,clip,height=0.26\linewidth]{figs/OT_Loehner/OT_t_0.5_DGSEM.png}
	\caption{Density and blending coefficient $\alpha$ for the Orszag-Tang vortex problem obtained with the hybrid DGSEM/FV method at $t=0.50$ using \textit{a-priori} element-wise blending (left) and subcell-wise blending (right) based on \eqref{eq:Loehner}.}
	\label{fig:OT_Loehner/OT_DG}
	
	\includegraphics[trim=409 852 1915 0 ,clip,width=0.26\linewidth]{figs/OT_Loehner/OT_t_0.5_FD.png}
	\includegraphics[trim=1915 852 409 0 ,clip,width=0.26\linewidth]{figs/OT_Loehner/OT_t_0.5_FD.png}
	\includegraphics[trim=2780 800 0 0 ,clip,height=0.26\linewidth]{figs/OT_Loehner/OT_t_0.5_FD.png}
	
	\includegraphics[trim=409 55 1915 852 ,clip,width=0.26\linewidth]{figs/OT_Loehner/OT_t_0.5_FD.png}
	\includegraphics[trim=1915 55 409 852 ,clip,width=0.26\linewidth]{figs/OT_Loehner/OT_t_0.5_FD.png}
	\includegraphics[trim=2780 0 0 800 ,clip,height=0.26\linewidth]{figs/OT_Loehner/OT_t_0.5_FD.png}
	\caption{Density and blending coefficient $\alpha$ for the Orszag-Tang vortex problem obtained with the hybrid FD/FV method at $t=0.50$ using \textit{a-priori} element-wise blending (left) and subcell-wise blending (right) based on \eqref{eq:Loehner}.}
	\label{fig:OT_Loehner/OT_FD}
\end{figure}

We use \eqref{eq:Loehner_FD} to evaluate \eqref{eq:Loehner} locally at each degree of freedom of each element.
To avoid additional communication costs, and taking into account that the solution quantities at the element boundaries are not uniquely defined, we compute \eqref{eq:Loehner_FD} only for the internal degrees of freedom in each direction, i.e., $i \in [1, \ldots, N-1]$.
This does not affect the shock-capturing properties of the method significantly since the normal fluxes at the element boundaries are identical for the low-order FV and high-order SBP methods.

Figures~\ref{fig:OT_Loehner/OT_DG} and \ref{fig:OT_Loehner/OT_FD} show the density and blending coefficient contours at $t=0.5$ obtained with the element-wise and subcell-wise hybrid SBP/FV methods for the \textit{a-priori} limiting.
The blending coefficient contours show the same features that we observed with the \textit{a-posteriori} limiting.
However, it is hard to distinguish strong differences in the density contours due to the very local nature of the Löhner indicator.

The amount of dissipation introduced by both subcell-wise schemes is similar, but the subcell-wise DGSEM is slightly more dissipative, as observed in the evolution of $\bar \alpha$ and $S_{\Omega}$ in Figure~\ref{fig:OT_Loehner/OT_alpha_ent}.
This slightly higher dissipation is likely to be a consequence of the node spacing and the implementation of the Löhner indicator as a compact finite-difference scheme \eqref{eq:Loehner_FD}.
Figure~\ref{fig:OT_Loehner/OT_alpha_ent} shows clearly that the most dissipative scheme is again the element-wise SBP-FD method, followed by the element-wise DGSEM.
Contrary to the \textit{a-posteriori} limiting techniques based on \eqref{eq:IDPconditionOT} and \eqref{eq:IDPconditionEnt} (without smoothness indication), all schemes that use \textit{a-priori} limiting based on \eqref{eq:Loehner} introduce very little dissipation at the beginning of the simulation, when the solution is smooth and no shocks are present in the domain.

Finally, Figure~\ref{fig:OT_Loehner/OT_slice} shows the pressure along a slice of the simulation domain for the four different \textit{a-priori} methods and a comparison with the results obtained by the second-order solver of the astrophysics code Athena \cite{Stone2008} on a mesh with $2048^2$ degrees of freedom.

\begin{figure}
    \centering
	\includegraphics[trim=0 0 0 0 ,clip,width=0.45\linewidth]{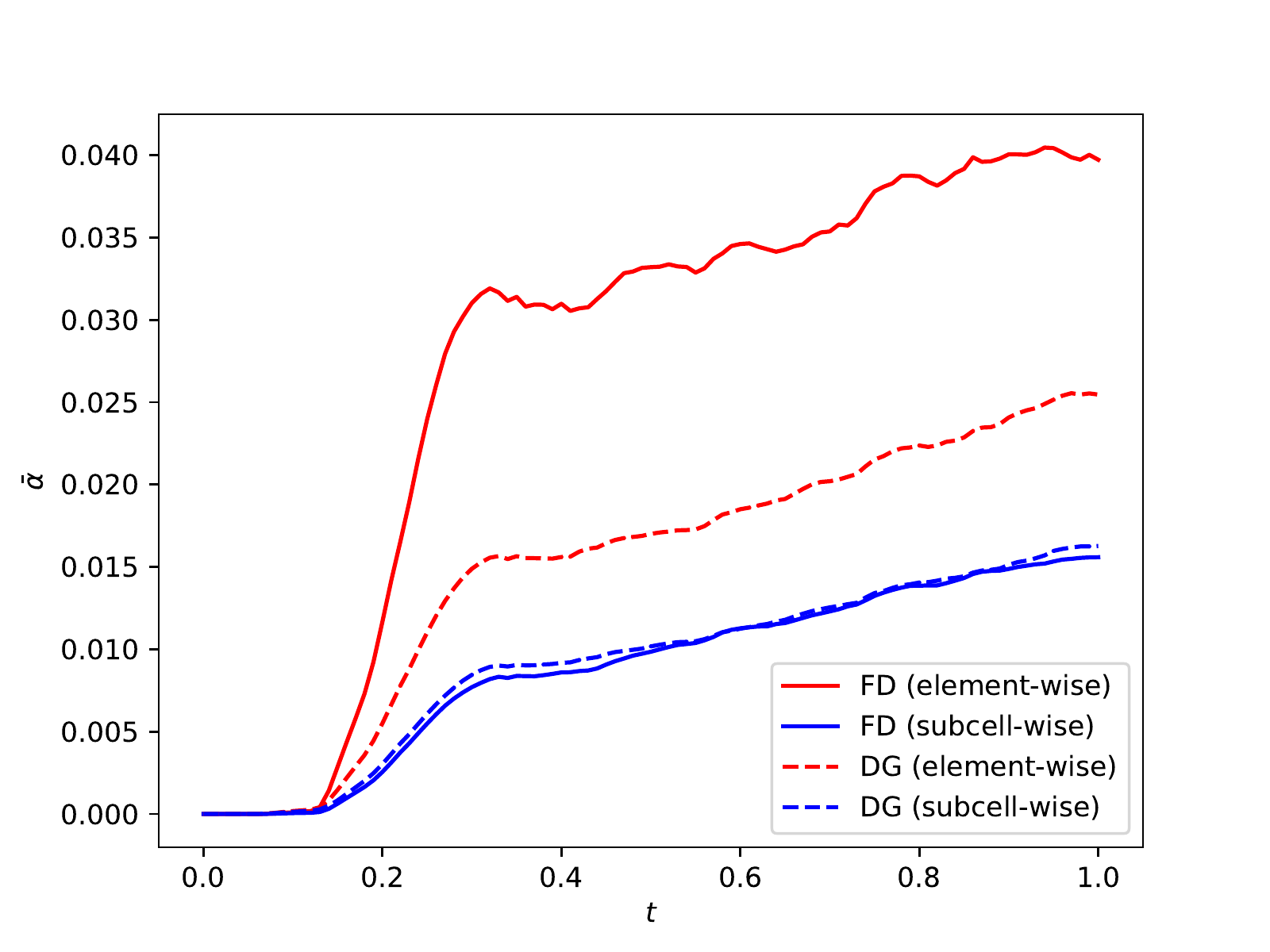}
	\includegraphics[trim=0 0 0 0 ,clip,width=0.45\linewidth]{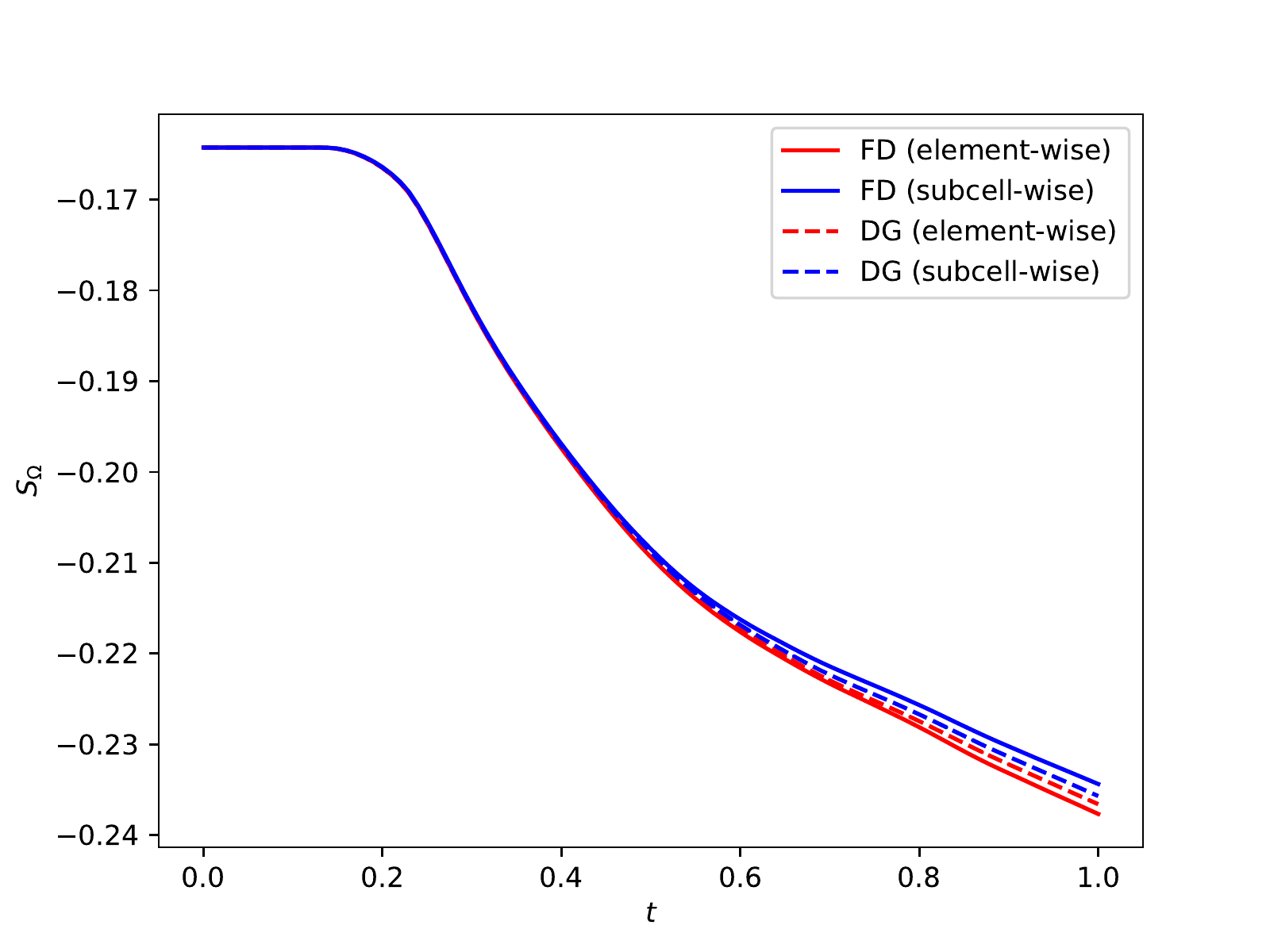}
	\caption{Evolution of the mean blending coefficient and the total entropy for the Orszag-Tang vortex test using the \textit{a-priori} limiting based on \eqref{eq:Loehner}.}
	\label{fig:OT_Loehner/OT_alpha_ent}

    \centering
	\includegraphics[trim=0 0 0 0 ,clip,width=0.8\linewidth]{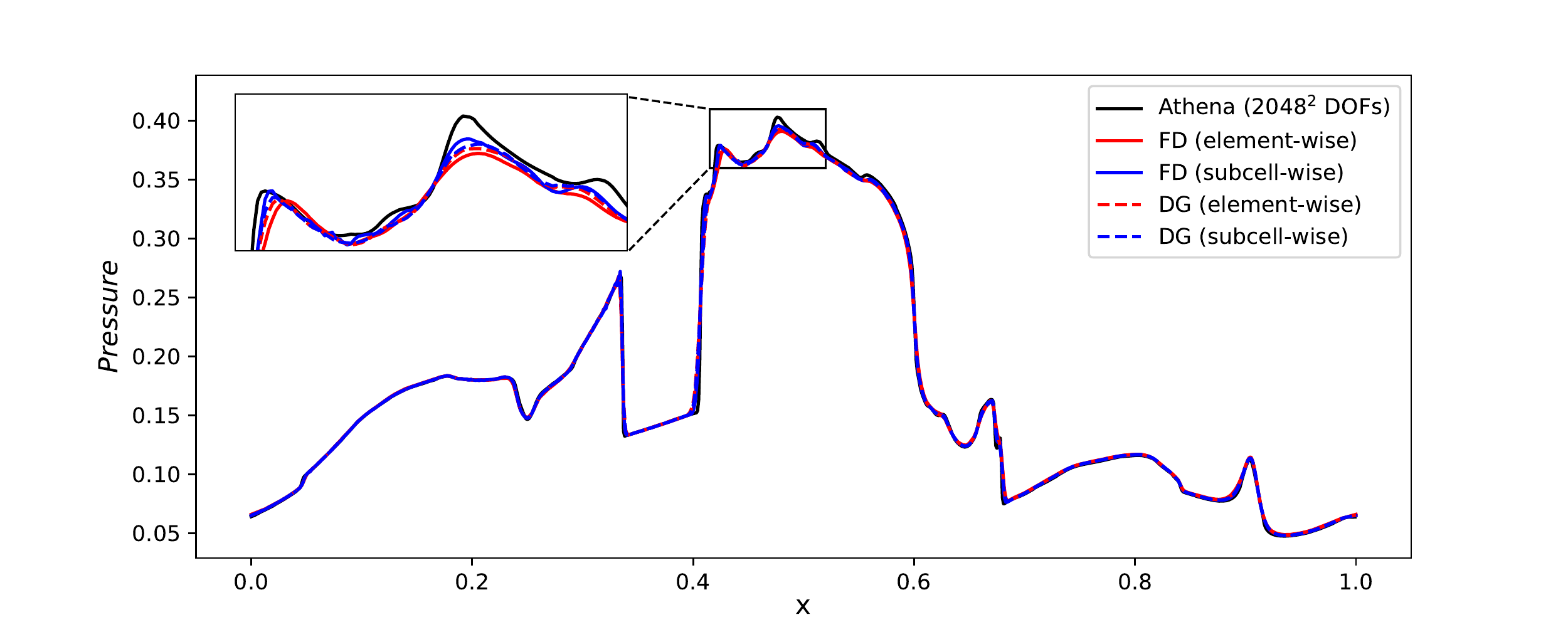}
	\caption{Slice of dimensionless pressure for the Orszag-Tang vortex test at $t=0.5$ and y=0.4277 using the \textit{a-priori} limiting, and comparison with the second-order solver of Athena \cite{Stone2008,Rueda-Ramirez2020} on a finer mesh.}
	\label{fig:OT_Loehner/OT_slice}
\end{figure}

\begin{figure}[h!]
	\centering
	\includegraphics[trim=409 852 1915 0 ,clip,width=0.26\linewidth]{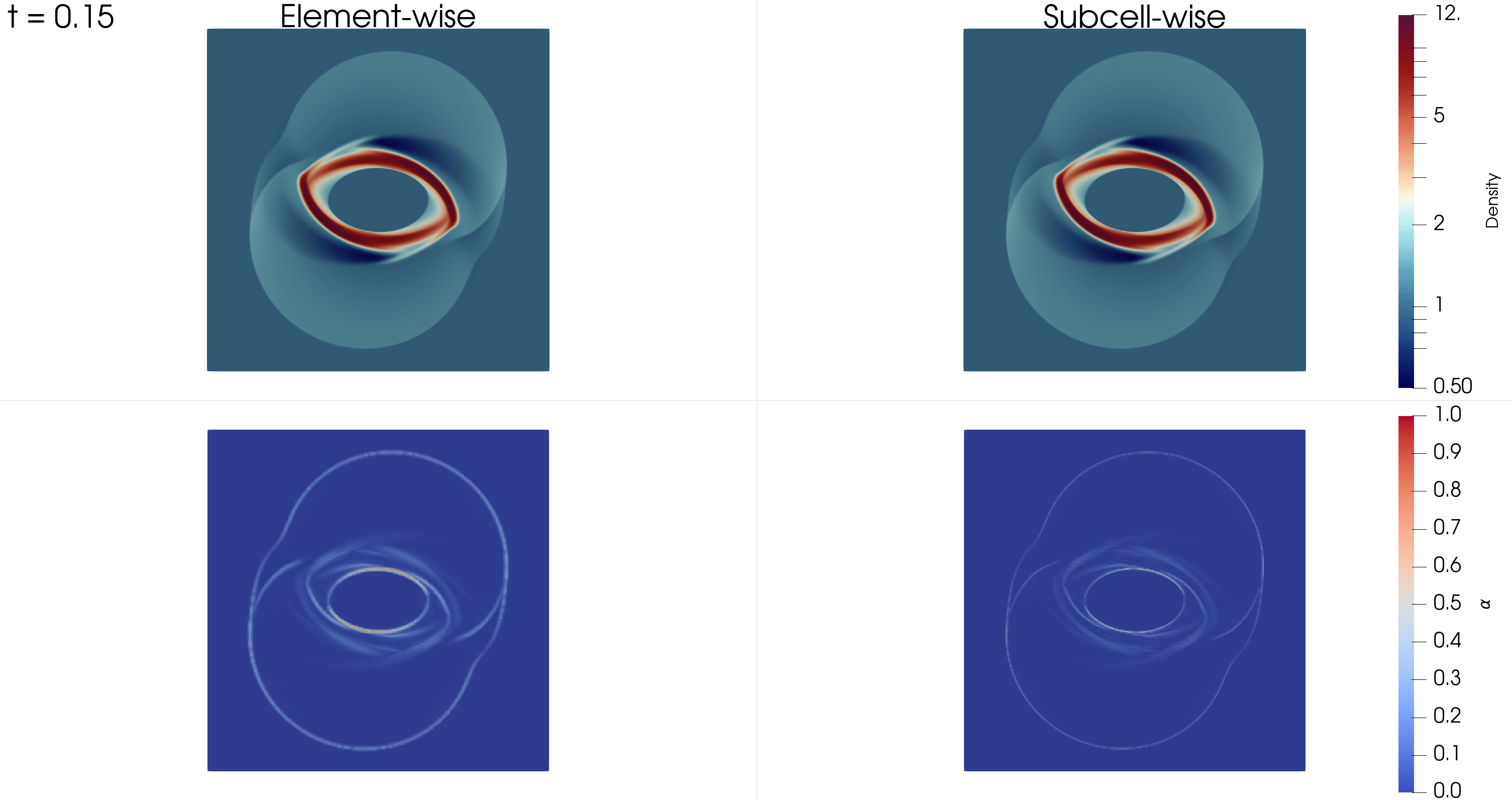}
	\includegraphics[trim=1915 852 409 0 ,clip,width=0.26\linewidth]{figs/Rotor_Loehner/Rotor_t_0.15_DGSEM.png}
	\includegraphics[trim=2780 800 0 0 ,clip,height=0.26\linewidth]{figs/Rotor_Loehner/Rotor_t_0.15_DGSEM.png}
	
	\includegraphics[trim=409 55 1915 852 ,clip,width=0.26\linewidth]{figs/Rotor_Loehner/Rotor_t_0.15_DGSEM.png}
	\includegraphics[trim=1915 55 409 852 ,clip,width=0.26\linewidth]{figs/Rotor_Loehner/Rotor_t_0.15_DGSEM.png}
	\includegraphics[trim=2780 0 0 800 ,clip,height=0.26\linewidth]{figs/Rotor_Loehner/Rotor_t_0.15_DGSEM.png}
	\caption{Density and blending coefficient $\alpha$ for the rotor problem obtained with the hybrid DGSEM/FV method at $t=0.15$ using \textit{a-priori} element-wise blending (left) and subcell-wise blending (right) based on \eqref{eq:Loehner}.}
	\label{fig:Rotor_Loehner/Rotor_DG}
	
	\includegraphics[trim=409 852 1915 0 ,clip,width=0.26\linewidth]{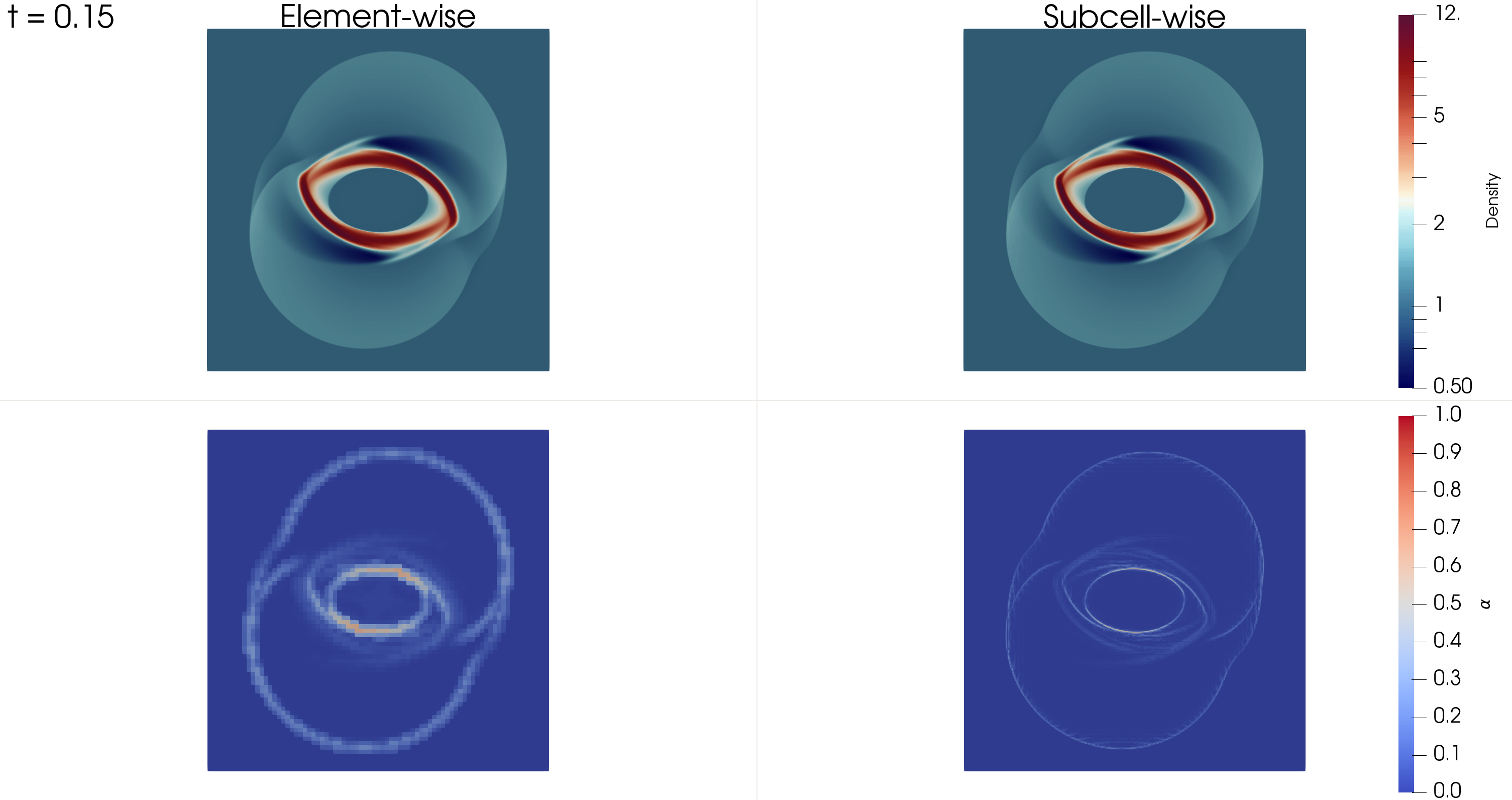}
	\includegraphics[trim=1915 852 409 0 ,clip,width=0.26\linewidth]{figs/Rotor_Loehner/Rotor_t_0.15_FD.png}
	\includegraphics[trim=2780 800 0 0 ,clip,height=0.26\linewidth]{figs/Rotor_Loehner/Rotor_t_0.15_FD.png}
	
	\includegraphics[trim=409 55 1915 852 ,clip,width=0.26\linewidth]{figs/Rotor_Loehner/Rotor_t_0.15_FD.png}
	\includegraphics[trim=1915 55 409 852 ,clip,width=0.26\linewidth]{figs/Rotor_Loehner/Rotor_t_0.15_FD.png}
	\includegraphics[trim=2780 0 0 800 ,clip,height=0.26\linewidth]{figs/Rotor_Loehner/Rotor_t_0.15_FD.png}
	\caption{Density and blending coefficient $\alpha$ for the rotor problem obtained with the hybrid FD/FV method at $t=0.15$ using \textit{a-priori} element-wise blending (left) and subcell-wise blending (right) based on \eqref{eq:Loehner}.}
	\label{fig:Rotor_Loehner/Rotor_FD}
\end{figure}

\subsection{MHD Rotor Test}

The rotor test was first proposed by \citet{balsara2009divergence} and we use the modification proposed by \citet{toth2000b}.
The simulation domain is again the unit square, $\Omega = [0,1]^2$, which we tesselate with a Cartesian grid of $79 \times 79$ elements for the SBP-FD scheme ($1027 \times 1027$ degrees of freedom), and with $256 \times 256$ elements for the DGSEM scheme ($1024 \times 1024$ degrees of freedom).

The initial condition has a constant pressure and magnetic field, and a dense rotating disk of fluid in a medium with a constant background density and velocity,
\begin{align*}
    p(x,y,t=0) &= 1,& \vec{B}(x,y,t=0) &= \left( \frac{5}{4 \pi} , 0 \right), \\
    \rho(x,y,t=0) &=
     \begin{cases}
       10 & r< r_0 \\
       1+9f & r_0 \le r < r_1 \\
       1 & r \ge r_1
     \end{cases},&
    \vec{v}(x,y,t=0) &= 
      \begin{cases}
       \frac{u_0}{r_0}\left( \frac{1}{2} - y, x - \frac{1}{2}\right), & r< r_0 \\
       \frac{f u_0}{r_0}\left( \frac{1}{2} - y, x - \frac{1}{2}\right), & r_0 \le r < r_1 \\
       \left(0,0\right), & r \ge r_1
     \end{cases},
     \\
     \psi (x,y,t=0) &= 0,
\end{align*}
with $r\coloneqq \sqrt{\left(x-\frac{1}{2}\right)^2+\left(y-\frac{1}{2}\right)^2}$, $r_0=0.1$, $r_1=0.115$, $u_0=2$, and $f=\frac{r_1-r}{r_1-r_0}$.

For this example, we use the \textit{a-priori} selection of the blending coefficient based on the Löhner indicator \eqref{eq:Loehner} with the solution quantity $u=\rho p$, as it showed to provide good results in the last section.
As before, we apply the limiting technique in an element-wise and subcell-wise manner.

Figures~\ref{fig:Rotor_Loehner/Rotor_DG} and \ref{fig:Rotor_Loehner/Rotor_FD} illustrate the density (in logarithmic scale) and blending coefficient at time $t=0.15$ for the hybrid DGSEM/FV and SBP-FD/FV schemes, respectively.
Moreover, Figure~\ref{fig:Rotor_Loehner/Rotor_alpha_ent} shows the evolution of the mean blending coefficient and the total entropy in the computational domain.
The most dissipative scheme is again the high-order SBP-FD with element-wise FV limiting (due to the element sizes), followed by the element-wise combination of DGSEM with FV, and the subcell-wise methods.
Both subcell-wise schemes show similar mean blending coefficients throughout the simulation, while the DGSEM/FV scheme uses slightly more of the FV method at the beginning of the simulation.
This additional limiting at the beginning of the simulation, which is probably caused by the node spacing of the DGSEM method, translates into a higher entropy dissipation.

\begin{figure}
    \centering
	\includegraphics[trim=0 0 0 0 ,clip,width=0.45\linewidth]{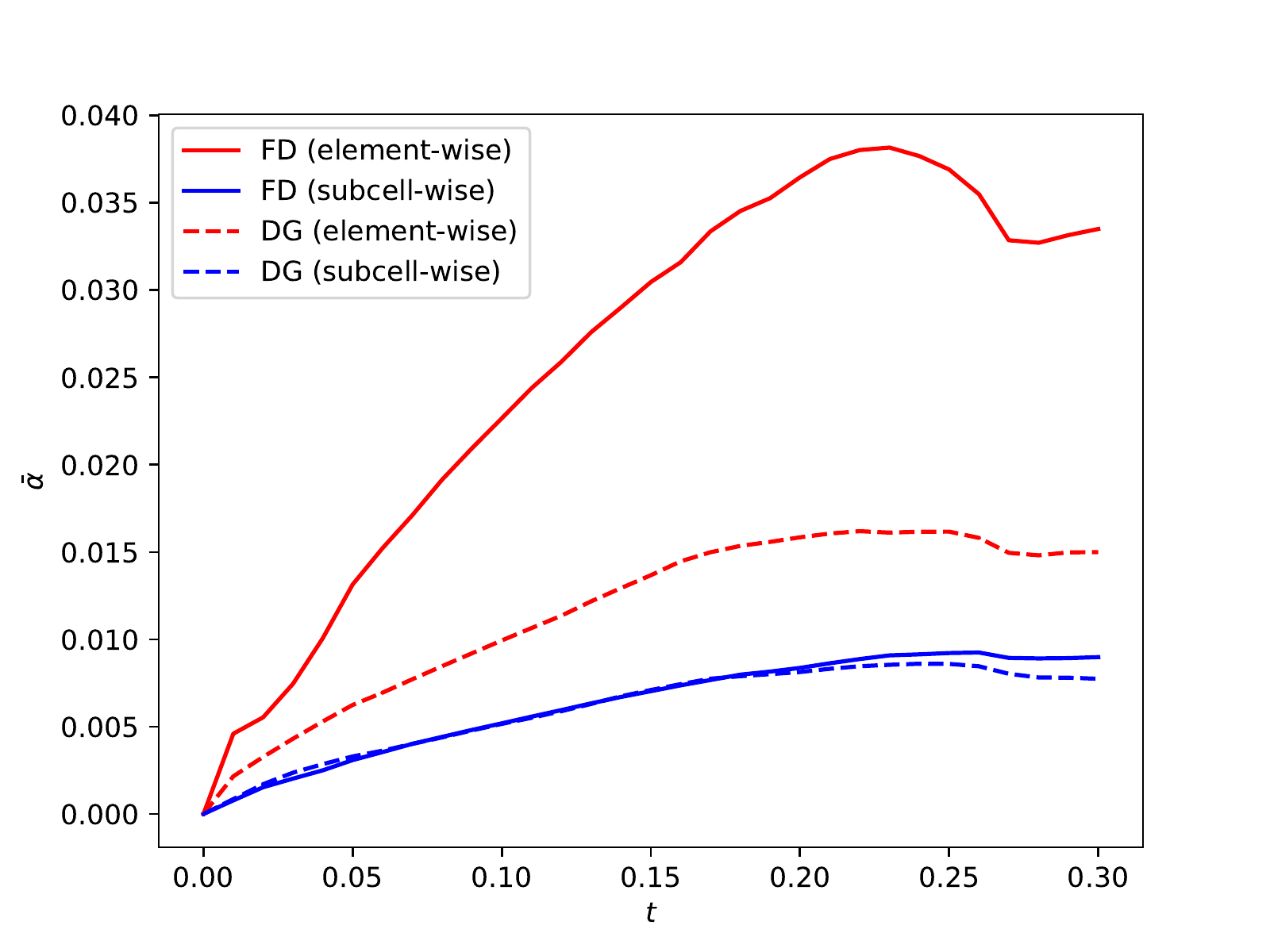}
	\includegraphics[trim=0 0 0 0 ,clip,width=0.45\linewidth]{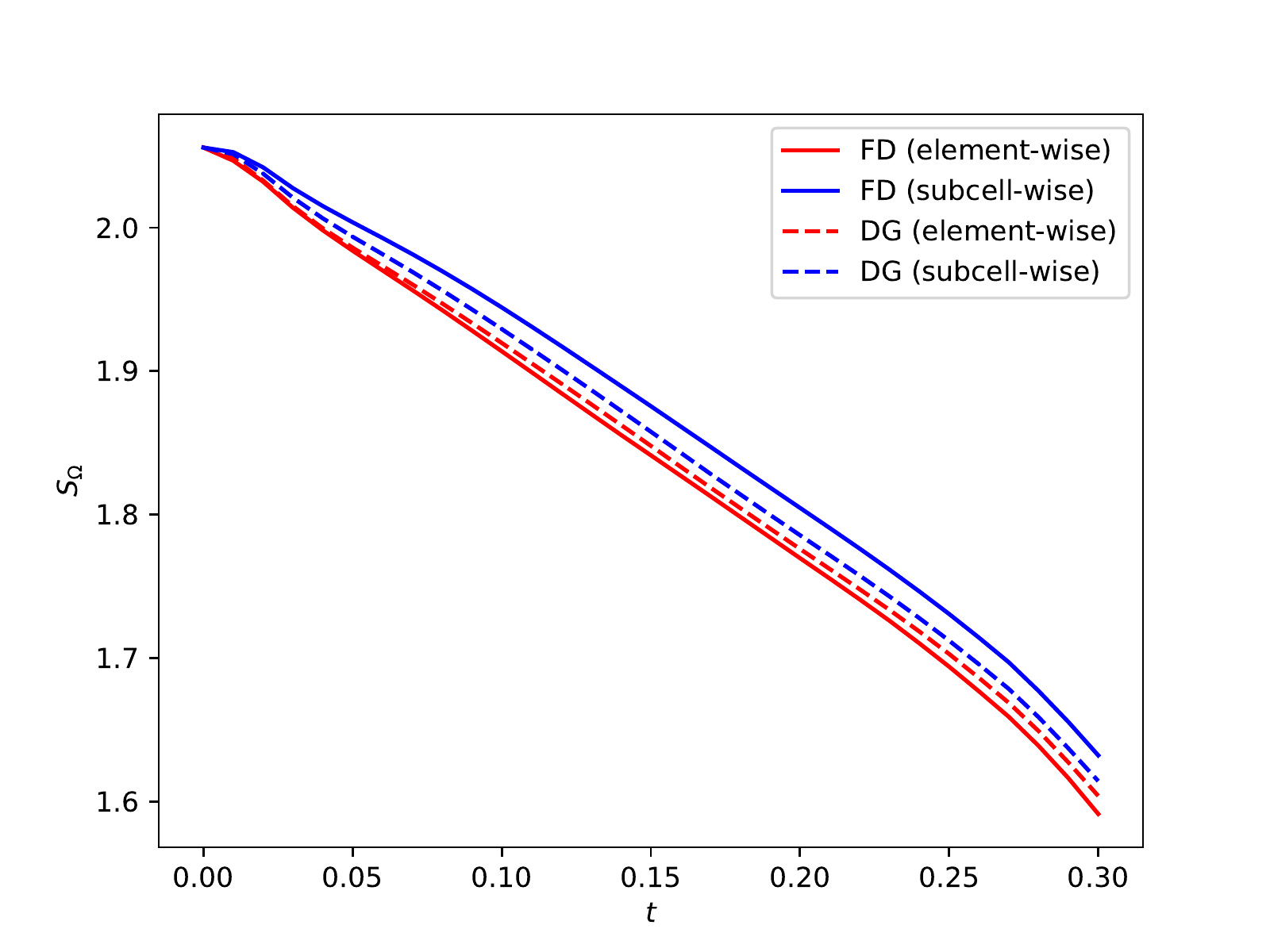}
	\caption{Evolution of the mean blending coefficient and the total entropy for the Orszag-Tang vortex test using the \textit{a-priori} limiting based on \eqref{eq:Loehner}.}
	\label{fig:Rotor_Loehner/Rotor_alpha_ent}
\end{figure}

\section{Conclusions} \label{sec:Conclusions}

We showed that high-order discretizations of non-conservative systems using diagonal-norm summation-by-parts (SBP) operators can rewritten as a difference of so-called staggered (or telescoping) ``fluxes''.
Our staggered ``fluxes'' reduce to the symmetrix and unique telescoping fluxes of \citet{Fisher2013a} for conservative systems, but they are also valid for non-conservative systems.
For non-conservative systems, our staggered ``fluxes'' are in general neither unique nor symmetric for two pairs of contiguous subcells, which accounts for the non-conservative nature of the equations.

We further showed that the existence of a flux-differencing formula enables to combine high-order SBP discretization of non-conservative systems with a low-order FV scheme at the subcell level to improve the robustness of the high-order discretizations.
The subcell-wise limiting is more local, and hence adds less dissipation, than element-wise limiting procedures in the literature.

We demonstrated the advantages of subcell-wise limiting for two different high-order SBP discretization techniques applied to the magnetohydrodynamics equations: the discontinuous Galerkin spectral element method (DGSEM) with Legendre-Gauss-Lobatto points and a high-order SBP finite difference method (SBP-FD).
For both high-order discretizations, subcell-wise limiting delivered better accuracy than element-wise limiting for bith \textit{a-priori} and \textit{a-posteriori} selection of the limiting coefficients.

\section*{Acknowledgments}

Gregor Gassner and Andrés M. Rueda-Ramírez acknowledge funding through the Klaus-Tschira Stiftung via the project ``HiFiLab''.
Gregor Gassner further acknowledges funding by the German Research Foundation under the grant number DFG-FOR5409.

We furthermore thank the Regional Computing Center of the University of Cologne (RRZK) for providing computing time on the High Performance Computing (HPC) system ODIN as well as support.

\printcredits

\bibliographystyle{model1-num-names}

\bibliography{Biblio.bib}

\section*{Appendices}
\appendix

\section{Extension to 2D and 3D Discretizations on Curvilinear Meshes} \label{app:3D}

The extension of system \eqref{eq:noncons-system} to multiple dimensions reads
\begin{equation} \label{eq:noncons-system_3D}
\partial_t \mathbf{u} 
+ \Nabla \cdot \blocktensor{f}^a (\mathbf{u}) 
+ \noncon(\mathbf{u}, \Nabla \mathbf{u})
= \state{0},
\end{equation}
where the non-conservative term can be expressed in different forms.
For instance, it can depend on the divergence of a quantity, i.e.,
\begin{equation}
    \noncon(\mathbf{u}, \Nabla \mathbf{u}) = \stateG{\phi} (\state{u}) ( \Nabla \cdot {\vec{b}}),
\end{equation}
like the Godunov-Powell non-conservative term, for which $\vec{b}$ corresponds to the magnetic field, or it can depend on the gradient of a quantity, i.e.,
\begin{equation}
    \noncon(\mathbf{u}, \Nabla \mathbf{u}) = \blocktensorG{\phi} (\state{u}) \cdot \Nabla b,
\end{equation}
like the GLM non-conservative term, for which $b$ corresponds to the divergence cleaning field, or
\begin{equation}
    \vec{\noncon}(\mathbf{u}, \Nabla \mathbf{u}) = \phi (\state{u}) \Nabla b,
\end{equation}
like in the case of the shallow water equations, for which $b$ corresponds to the bottom topography.

Regardless of the form of the non-conservative term, a high-order SBP discretization on multiple space dimensions, $d>1$, and curvilinear meshes can be constructed using tensor-product expansions of \eqref{eq:DGSEM}.
The operators are commonly constructed such, that the differentiation and integration operations \eqref{eq:SBP_diff_int} are done in a reference element, $\vec{\xi} \in [-1,1]^d$, and a high-order mapping is defined to convert from reference to physical space, $\vec{x}=\vec{x}(\vec{\xi})$.
For instance, the 3D high-order SBP discretization reads \cite{rueda2022entropy}
\begin{align} \label{eq:Gauss_DGSEM_GLMMHD_3D}
J_{ijk} \omega_{ijk} \dot{\state{u}}_{ijk} 
&+ 
\omega_{jk} \left\lbrace
  \sum_{m=0}^N \hat{S}_{im} 
    \left(
    \tilde{\state{f}}^{1*}_{(i,m)jk}
    +\numnonconsSxi{\tilde{\Jan}}_{(i,m)jk}
    \right)
 + \delta_{iN} \left(
  \numfluxb{f}_{(N,R)jk}
    +\numnonconsD{\Jan}_{(N,R)jk}
  \right)
 - \delta_{i0} \left(
  \numfluxb{f}_{(0,L)jk}
    +\numnonconsD{\Jan}_{(0,L)jk}
  \right)
\right\rbrace
\nonumber\\
&+ 
\omega_{ik} \left\lbrace
  \sum_{m=0}^N \hat{S}_{jm}
    \left(
    \tilde{\state{f}}^{2*}_{i(j,m)k}
    +\numnonconsSeta{\tilde{\Jan}}_{i(j,m)k}
    \right)
 + \delta_{jN} \left(
  \numfluxb{f}_{i(N,R)k}
    +\numnonconsD{\Jan}_{i(N,R)k}
  \right)
 - \delta_{j0} \left(
  \numfluxb{f}_{i(0,L)k}
    +\numnonconsD{\Jan}_{i(0,L)k}
  \right)
\right\rbrace
\nonumber\\
&+ 
\omega_{ij} \left\lbrace
  \sum_{m=0}^N \hat{S}_{km}
    \left(
    \tilde{\state{f}}^{3*}_{ij(k,m)}
    +\numnonconsSzeta{\tilde{\Jan}}_{ij(k,m)}
    \right)
 + \delta_{kN} \left(
  \numfluxb{f}_{ij(N,R)}
    +\numnonconsD{\Jan}_{ij(N,R)}
  \right)
 - \delta_{k0} \left(
  \numfluxb{f}_{ij(0,L)}
    +\numnonconsD{\Jan}_{ij(0,L)}
  \right)
\right\rbrace
= \state{0},
\end{align}
where $J_{ijk}$ is the determinant of the mapping Jacobian, which may now be different at each degree of freedom of the element, and the contravariant basis vectors, $\vec{a}^m_{ijk}=\Nabla \xi^m$, define the mapping from reference space to physical space.
Moreover, the two- and three-dimensional quadrature weights are defined from the one-dimensional weights as
\begin{equation}
\omega_{ij} := \omega_i \omega_j, ~~~~~ \omega_{ijk} := \omega_i \omega_j \omega_k.
\end{equation}
The volume numerical two-point fluxes are
\begin{align}
\tilde{\state{f}}^{1*}_{(i,m)jk} &:= \blocktensor{f}^{*}(\state{u}_{ijk}, \state{u}_{mjk}) \cdot \avg{J\vec{a}^1}_{(i,m)jk}, ~~~~~
\tilde{\state{f}}^{2*}_{i(j,m)k} := \blocktensor{f}^{*}(\state{u}_{ijk}, \state{u}_{imk}) \cdot \avg{J\vec{a}^2}_{i(j,m)k}, \nonumber \\
\tilde{\state{f}}^{3*}_{ij(k,m)} &:= \blocktensor{f}^{*}(\state{u}_{ijk}, \state{u}_{ijm}) \cdot \avg{J\vec{a}^3}_{ij(k,m)},
\end{align}
where $\blocktensor{f}^{*}$ is a symmetric and consistent two-point averaging flux function, which can be selected to provide entropy conservation \cite{Derigs2017}, $\avg{\cdot}$ denotes the average operator, and the multiplication with the average of $J \vec{a}$ is a technique commonly known as metric dealiasing \cite{Gassner2016}.
The non-conservative two-point terms, $\tilde{\Jan}^{d\star}$, also perform metric dealiasing, but not necessarily with the average of the metric terms \cite{rueda2022entropy}.

Following the procedure of Section~\ref{sec:fluxdiff}, it is possible to rewrite \eqref{eq:Gauss_DGSEM_GLMMHD_3D} as a flux-differencing formula,
\begin{equation} \label{eq:fluxdiff3d}
    J_{ijk} \dot{\state{u}}_{ijk} = 
        \frac{1}{\omega_i}
        \left(
        \stateG{\Gamma}^{1}_{(i,i-1)jk}
        -\stateG{\Gamma}^{1}_{(i,i+1)jk}
        \right)
        +
        \frac{1}{\omega_j}
        \left(
        \stateG{\Gamma}^{2}_{i(j,j-1)k}
        -\stateG{\Gamma}^{2}_{i(j,j+1)k}
        \right)
        +
        \frac{1}{\omega_k}
        \left(
        \stateG{\Gamma}^{3}_{ij(k,k-1)}
        -\stateG{\Gamma}^{3}_{ij(k,k+1)}
        \right)
    ,
    \qquad
    j=0, \ldots, N,
\end{equation}
where the indexes $j=-1$ and $j=N+1$ refer to the outer states and $\stateG{\Gamma}^d_{(a,b)}$ is the so-called staggered (or telescoping) ``flux'' between node $a$ and the \textbf{adjacent} node $b$.

As in Section~\ref{sec:fluxdiff}, the only requirement of \eqref{eq:fluxdiff3d} is that the volume numerical two-point terms can be written as a product of local and symmetric contributions, e.g.,
\begin{equation} \label{eq:condition3d}
    {\numnonconsSeta{\tilde{\Jan}}}_{i(j,m)k} := \Jan^{2,\mathrm{loc}}_{ijk} \circ \, \Jan^{2,\mathrm{sym}}_{i(j,m)k},
\end{equation}
where $\Jan^{2,\mathrm{loc}}_{ijk} := \Jan^{2,\mathrm{loc}} (\state{u}_{ijk},\state{b}_{ijk})$ only depends on local quantities, and  $\Jan^{2,\mathrm{sym}}_{i(j,m)k} := \Jan^{2,\mathrm{sym}}_{i(m,j)k}$ is a symmetric two-point flux that depends on values at nodes $ijk$ and $imk$.

The staggered fluxes are defined as in \eqref{eq:leftFlux}-\eqref{eq:rightFlux} in each reference coordinate direction.
For instance, the staggered fluxes in $\eta$-direction read
\begin{align}
\stateG{\Gamma}^2_{i(0,-1)k}  &= \numfluxb{f}_{i(0,L)k} + \numnonconsD{\Jan}_{i(0,L)k}
\label{eq:leftFlux3d}
\\
\stateG{\Gamma}^2_{i(j,m)k} &= \sum_{l=0}^{\min(j,m)} \sum_{n=0}^N S_{ln} \tilde{\state{f}}^{2*}_{i(l,n)k} 
+ \Jan^{2,\mathrm{loc}}_{ijk} \circ \sum_{l=0}^{\min(j,m)} \sum_{n=0}^N S_{ln} \Jan^{2,\mathrm{sym}}_{i(l,n)k}, & j=0, \ldots, N-1, m \in \{j-1,j+1\},
\label{eq:internFlux3d}\\
\stateG{\Gamma}^2_{i(N,N+1)k} &= \numfluxb{f}_{i(N,R)k} +  \numnonconsD{\Jan}_{i(N,R)k}.
\label{eq:rightFlux3d}
\end{align}

The proof follows exactly as the proof of Proposition~\ref{prop:fluxdiff}.

\end{document}